\documentclass[12pt, reqno]{amsart}
\usepackage{amsmath, amsthm, amsfonts, amssymb, verbatim, graphicx, enumerate}
\usepackage{a4wide}

\usepackage{tikz}
\usetikzlibrary{calc,decorations.markings}

\newcommand{\R}{\mathbb{R}}
\newcommand{\Q}{\mathbb{Q}}
\newcommand{\N}{\mathbb{N}}
\newcommand{\C}{\mathbb{C}}
\newcommand{\Z}{\mathbb{Z}}
\newcommand{\uhp}{\mathbb{H}}

\newcommand{\im}{\textnormal{Im}}

\newtheorem{thm}{Theorem}
\newtheorem*{thm*}{Theorem Ch}
\newtheorem{cor}{Corollary}
\newtheorem{lem}{Lemma}
\newtheorem{prop}{Proposition}
\newtheorem*{prop*}{Proposition J}

{\theoremstyle{definition}
\newtheorem{claim}{Claim}

\newtheorem{rmk}{Remark}

}

\title[]{H\"{o}lder regularity of arithmetic Fourier series arising from modular forms}
\author[]{Izabela Petrykiewicz}

\address[I. Petrykiewicz]{Universit\'e Joseph Fourier, Institut Fourier, 100 rue des maths, 38402 St Martin d'H\`{e}res, France
}
\email{izabela.petrykiewicz@ujf-grenoble.fr}

\subjclass[2010]{Primary 42A16; Secondary 11F03, 11J70, 26A15, 65T60}

\keywords{H\"{o}lder regularity, Modular forms, Wavelets}

\date{\today}

\begin{document}
 
\begin{abstract}
Given a modular form which is not a cusp form $M_k(z)=\sum_{n=0}^{\infty}r_ne^{2\pi inz}$ of weight $k \geq 4$, we define the series 
$M_{k,s}(x)=\sum_{n=1}^{\infty}\frac{r_n}{n^s}\sin(2\pi nx),$ which converges for all $x\in\R$ when $s>k$. In this paper, we compute 
the H\"{o}lder regularity exponent of $M_{k,s}$ at irrational points. In our analysis we apply wavelets methods proposed by Jaffard 
in 1996 in the study of the Riemann series. We find that the H\"{o}lder regularity exponent at a point $x$ is related to the fine 
diophantine properties of $x$, in a very precise way. 
\end{abstract}

\maketitle

\section{Introduction and statement of the results}

In this paper, we study the H\"{o}lder regularity exponent of certain trigonometric series related to modular forms. 
We say that $f\in C^{\alpha}(x_0)$ for some $\alpha>0$ when there exists a polynomial $P$ of degree less than or equal to $[\alpha]$, 
and a constant $C$ such that 
$$|f(x)-P(x-x_0)|\leq C|x-x_0|^{\alpha},$$
as $x\to x_0$. Then we define the H\"{o}lder regularity exponent of $f$ at $x_0$ as $\alpha(x_0)=\sup\{\beta:f\in C^{\beta}(x_0)\}$.

Let $k\geq 4$ be even, and let $$M_k(z)=\sum_{n=0}^{\infty}r_ne^{2\pi inz}$$ be a modular form under $SL_2(\Z)$ of weight $k$, defined over
 $\uhp=\{z\in\C|\im(z)>0\}$; it is a cusp form when $r_0=0$. We then consider the series
$$M_{k,s}(x)=\sum_{n=1}^{\infty}\frac{r_n}{n^s}\sin(2\pi nx),$$
for suitable $s\in\R$ and $x\in\R$. We are interested in the H\"{o}lder regularity exponent of $M_{k,s}$ at $x\in\R\setminus\Q$. 

\medskip
This work is motivated by the example of the Riemann ``non-differentiable'' function which is defined as
\begin{equation}\label{riems}
 S(x)=\sum_{n=1}^{\infty}\frac{1}{n^2}\sin(\pi n^2 x).
\end{equation}
This kind of series were first introduced by Riemann and also studied by Chowla and Walfisz \cite{ChoWa}, see also \cite{LMZ}.
The differentiability and pointwise H\"{o}lder regularity of $S$ have been studied for about 80 years by many mathematicians like Hardy, Littlewood, Gerver, 
Itatsu, Duistermaat and Jaffard, see \cite{H, HL, G, I, Du, J1, J2}. The function $f$ is only differentiable at rational points of the form
$\frac{\text{odd}}{\text{odd}}$. The key ingredient in the study of Riemann's function was its relation to the theta function 
$\theta(z)=\sum_{n\in\Z}e^{i\pi n^2 z}$, which is an automorphic form of weight $\frac{1}{2}$ under the action of $\theta$-modular group. 
The function $\theta$ appears in the study of continued fractions. (For example Kraaikamp and Lopes in \cite{KL} establish the relation between the 
$\theta$ group and continued fraction with even partial quotients. See Rivoal and Seuret \cite{RS} for an elaboration of this connection for functions 
similar to $S(x)$.) It appears that the pointwise regularity at irrational points is also connected to continued fraction expansions. Let 
$x\in\R\setminus\Q$, and $(a_n)_n$ be the sequence of partial quotients of $x$, that is $x=[a_0;a_1, a_2, ...]$. Let $(\frac{p_n}{q_n})_n$ be the 
sequence of continued fraction approximations of $x$, that is $\frac{p_n}{q_n}=[a_0;a_1, a_2, ..., a_n]$. The convergents can be obtained from partial 
quotients by the recurrence relations: ${p_{n}=a_{n}p_{n-1}+p_{n-2}}$, ${q_{n}=a_{n}q_{n-1}+q_{n-2}}$, for $n\geq0$, and ${p_{-1}=1}, {p_{-2}=0}, 
{q_{-1}=0}, {q_{-2}=1}$. For each $n$, we define $\kappa_n$ by the equality ${\big|x-\frac{p_n}{q_n}\big|= \frac{1}{q_n^{\kappa_n}}}$. We then define
\begin{align*}
\mu(x)&=\limsup_{n\to\infty}\kappa_n,\\
\nu(x)&=\liminf_{n\to\infty}\kappa_n.
\end{align*}
For all $x\in\R\setminus\Q$, we have $\mu(x)\geq\nu(x)\geq2$, and for almost all $x$, $\nu(x)=\mu(x)=2$. 
Let $\mu_{\text{e}}(x)=\limsup_{n\to\infty}\{\kappa_n|p_n,q_n \text{ are not both odd}\}$. 
Using the tools of wavelet analysis, Jaffard proved in 1996 in \cite{J2} that the H\"{o}lder regularity exponent of $S$ at an irrational point $x$ is equal to
$$\alpha(x)=\frac{1}{2}+\frac{1}{2\mu_{\text{e}}(x)}.$$
In our analysis we follow the method proposed by Jaffard. However, before we state our results, we would like to stress that we allow $\alpha\in\N$. We just bear in mind that if we write that $\alpha(x_0)=\alpha$ for $\alpha\in\N$, we do not mean that the function is $\alpha$ times differentiable at $x_0$.
For instance $x\mapsto x\log(x)$ has H\"{o}lder exponent 1 at $x=0$, but it is not differentiable there.

\medskip
Before we state our results, we mention that if $\nu(x)=\infty$ or $\mu(x)=\infty$, we use the convention that $\frac{1}{\nu(x)}=0$ or $\frac{1}{\mu(x)}=0$ and all our theorems remain valid in this case. 
Let $k\geq4$ be even. The series $M_{k,s}$ converges normally on $\R$ for all $s>k$.
We prove this fact (and more) in Section~\ref{swavelets}.

\begin{thm}\label{thm:MknotcuspHE}
 Let $k\geq 4$, even, and $M_k$ be a modular form of weight $k$ under $SL_2(\Z)$ not a cusp form. For $x\in \R\setminus\Q$, let $\alpha_{k,s}(x)$ be the H\"{o}lder regularity exponent of $M_{k,s}$ at $x$. Assume that
\begin{equation}\label{labelno1}
s>k+\frac{k}{\nu(x)}-\frac{k}{\mu(x)}.
\end{equation}
Then
$$\alpha_{k,s}(x) = s-k+\frac{k}{\mu(x)}.$$
\end{thm}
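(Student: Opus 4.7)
The plan is to follow the wavelet-based approach that Jaffard developed for the Riemann series. The structural ingredient is the modular transformation $M_k\bigl(\tfrac{az+b}{cz+d}\bigr) = (cz+d)^k M_k(z)$ for $\bigl(\begin{smallmatrix} a & b \\ c & d \end{smallmatrix}\bigr) \in SL_2(\Z)$. Applied to a matrix sending $\infty$ to a rational $p/q$ in lowest terms, this gives $M_k(w) \sim r_0\,(q(w - p/q))^{-k}$ as $w \to p/q$ in the upper half-plane; the constant $r_0$ survives precisely because $M_k$ is not a cusp form, and it is this singular part that will drive everything. Since $M_{k,s}$ is, informally, a fractional integral of $M_k$ of order $s$, each rational $p/q$ should give rise to a chirp-like local expansion $M_{k,s}(x) = P_{p/q}(x - p/q) + O\bigl(q^{-\gamma}\,|x - p/q|^{s-k}\bigr)$ near $p/q$, for a polynomial $P_{p/q}$ and a computable exponent $\gamma$.

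Concretely, I would first compute the wavelet coefficients $c_{j,l} = \langle M_{k,s}, \psi_{j,l}\rangle$ in a smooth, compactly supported wavelet basis. Combining Poisson summation with the modular transformation reorganises the sum over integers $n$ defining $M_{k,s}$ into a sum indexed by reduced fractions $p/q$. The outcome is a decomposition of $c_{j,l}$ into chirp contributions at the various rationals plus a rapidly decaying remainder: the term indexed by $p/q$ is localised near $l 2^{-j} \approx p/q$ with size comparable to $q^{-\gamma}$, the dominant rationals being those of denominator near the resonance scale $2^{j/2}$. Jaffard's two-microlocal criterion then reduces the computation of $\alpha_{k,s}(x)$ to estimating the decay of $c_{j,l}$ along the dyadic cone of influence above $x$, which in turn is governed by how well $x$ is approximated by rationals---that is, by the continued-fraction data $(q_n, \kappa_n)$.

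The upper bound $\alpha_{k,s}(x) \leq s - k + k/\mu(x)$ is the easier half: along a subsequence of convergents $p_n/q_n$ with $\kappa_n \to \mu(x)$, one picks a scale $j_n \sim 2\log_2 q_n$ and exhibits a ``large'' coefficient $c_{j_n, l_n}$ coming from the chirp at $p_n/q_n$; the two-microlocal criterion then forces $\alpha_{k,s}(x) \leq s - k + k/\kappa_n$, and letting $n \to \infty$ gives the bound. The lower bound is substantially more delicate, and is where the hypothesis $s > k + k/\nu(x) - k/\mu(x)$ enters: one must control the combined effect of \emph{every} rational of denominator of order at most $2^{j/2}$ lying in the cone of influence above $x$, not only the convergents. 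The main obstacle is precisely this diophantine counting step---bounding the number and proximity of rationals of a given denominator in a shrinking neighbourhood of $x$, uniformly along the worst scales selected by the subsequence of convergents realising $\nu(x)$. I would expect the Farey structure of $\Q$ or the three-distance theorem to enter here, in order to sum the chirp contributions of non-convergent rationals against the resonance terms, and the precise value $k + k/\nu(x) - k/\mu(x)$ should emerge as the threshold beyond which this summation is absolutely convergent.
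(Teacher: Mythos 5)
Your overall architecture --- discrete wavelet basis, Poisson summation to reorganise $M_{k,s}$ into chirp contributions at all rationals, then a Farey/three-distance counting argument to sum the non-convergent contributions --- is much heavier than what is needed, and the two steps you yourself flag as delicate are genuine gaps, not technicalities. The paper avoids the entire summation-over-rationals problem by using the \emph{continuous} wavelet transform with the Cauchy-type wavelet $\psi_s(x)=(x+i)^{-s-1}$, for which one computes exactly $C(a,b)(M_{k,s})=\widehat{C}\,a^s\bigl(M_k(b+ia)-r_0\bigr)$. The wavelet coefficient at $(a,b)$ is a single value of the modular form at the point $b+ia\in\uhp$; there is no decomposition into chirps to resum, and the only diophantine input is which half-ring $D|x-\tfrac{p_n}{q_n}|\le|b+ia-x|\le D|x-\tfrac{p_{n-1}}{q_{n-1}}|$ the point lies in, which determines which convergent matrix $\gamma_n$ to apply. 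In particular your proposed origin of the hypothesis $s>k+k/\nu(x)-k/\mu(x)$ --- absolute convergence of a sum over non-convergent rationals --- is not where it comes from: it is exactly the condition $k/\nu(x)<s-k+k/\mu(x)$ needed so that the cone-aperture exponent $\alpha'=k/(\nu(x)-\delta)$ in the bound $|C(a,b)|\le Ca^{\alpha}(1+|b-x|/a)^{\alpha'}$ is strictly smaller than the scale exponent $\alpha=s-k+k/(\mu(x)+\delta)$, as required by the converse direction of Jaffard's Proposition. Without identifying this mechanism you cannot close the lower bound, and your ``main obstacle'' remains unresolved.

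The second gap is in your upper bound on the regularity (the ``easier half''). You invoke $M_k(w)\sim r_0\,(q(w-p/q))^{-k}$ as $w\to p/q$, but this asymptotic requires the transformed point $\gamma_n\cdot w$ to tend to the cusp, i.e.\ $\im(z)/(q_n^2|z|^2)\to\infty$ where $z=w-p_n/q_n$. At the resonant points $b=x$, $a\asymp q_n^{-\kappa_n}$ this quantity is $\asymp q_n^{\kappa_n-2}$, which stays \emph{bounded} whenever $\kappa_n\to 2$ --- that is, precisely in the generic case $\mu(x)=2$. There $\gamma_n\cdot(b+ia)$ accumulates at an interior point of $\uhp$ where $M_k$ could a priori vanish, and $|M_k(b+ia)|$ need not be comparable to $|r_0|\,q_n^{k(\kappa_n-1)}$. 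The paper handles this by extracting a subsequence along which $q_{n}^{\kappa_n-2}\to L_0$ and $(-1)^n q_{n-1}/q_n\to L_1$, and then using holomorphy of $M_k$ to choose the ring-width parameter $D_0$ so that $M_k\bigl(L_1-\tfrac{1-iD_0}{D_0^2+1}L_0\bigr)\ne 0$, guaranteeing a uniform lower bound $\delta$ near the limit point. Some argument of this kind (or an appeal to the non-vanishing of $M_k$ on a suitable compact set) is indispensable; without it your exhibited ``large'' coefficient may in fact be small, and the claimed upper bound $\alpha_{k,s}(x)\le s-k+k/\mu(x)$ does not follow.
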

\begin{rmk}
We note that, if $s>\frac{3k}{2}$, then (\ref{labelno1}) is satisfied for all $x\in\R\setminus\Q$. We do not know if  (\ref{labelno1}) can be relaxed to $s>k$ for any $x\in\R\setminus\Q$. 
However, it is satisfied for almost all $x$ for any $s>k$. 
\end{rmk}



In the proof of Theorem~\ref{thm:MknotcuspHE} we use the fact that if $M_k$ is not a cusp form, then $|M_k(z)|$ is bounded below by a positive constant when $\im(z)\to\infty$.  
A cusp form $M_k$ does not have this property, therefore in this case we have a weaker version of Theorem~\ref{thm:MknotcuspHE}, namely the following.
\begin{thm}\label{mainthm3}
 Let $k\geq 4$, even, and $M_k$ be a cusp form of weight $k$ under $SL_2(\Z)$. For $x\in \R\setminus\Q$, let $\beta_{k,s}(x)$ be the H\"{o}lder regularity exponent of $M_{k,s}$ at $x$. Assume that
\begin{equation}\label{labelno2}
s>\frac{k}{2}+1+\frac{2}{\nu(x)}-\frac{2}{\mu(x)}.
\end{equation}
\begin{enumerate}[(i)]
 \item We have
$$\beta_{k,s}(x) \geq s-\frac{k}{2}-1+\frac{2}{\mu(x)}.$$
\item Moreover, if there exists $N\in\N$ such that for infinitely many $n$
\begin{equation}\label{an15}
 a_{n}(x)=N,
\end{equation}
and if $\mu(x)=2,$ then
$$\beta_{k,s}(x) = s-\frac{k}{2}.$$
\end{enumerate}
\end{thm}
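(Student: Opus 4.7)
The plan is to adapt the wavelet analysis method used for Theorem~\ref{thm:MknotcuspHE} to the cusp form setting, the only substantial change being the replacement of the lower bound $|M_k(z)|\geq c>0$ (which holds as $\im(z)\to\infty$ when $r_0\neq 0$) by the Hecke-type estimate $|M_k(x+iy)|\leq Cy^{-k/2}$, valid uniformly in $x$ for any cusp form $M_k$ of weight $k$. This exchange of a lower bound for an upper bound of opposite sign is exactly what lowers the exponent from $s-k+k/\mu(x)$ in the non-cusp case to $s-k/2-1+2/\mu(x)$ here; the assumption~(\ref{labelno2}) plays the same role as~(\ref{labelno1}), ensuring that the sum over rational approximants to $x$ is dominated by the nearest convergent.

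\emph{Proof of (i).} Expand $M_{k,s}$ in a smooth wavelet basis $\{\psi_{j,l}\}$ and bound each coefficient $c_{j,l}=\langle M_{k,s},\psi_{j,l}\rangle$. Write $c_{j,l}$ as a contour integral in $\uhp$ of $M_k$ against an $s$-fold antiderivative of $\psi_{j,l}$. For a wavelet centered near $x$ at scale $2^{-j}$, deform the contour to pass close to each convergent $p_n/q_n$ of $x$, and at each such rational apply the modular transformation $M_k(z)=(qz-p)^{-k}M_k(\gamma z)$ with $\gamma\in SL_2(\Z)$ mapping $p/q$ to $\infty$. Bounding $|M_k(\gamma z)|$ by $C(\im\gamma z)^{-k/2}$, summing over the contributing convergents, and balancing the scale $2^{-j}$ against $|x-p_n/q_n|=q_n^{-\kappa_n}$ produces a bound on $|c_{j,l}|$ of the form required by Jaffard's wavelet criterion for the exponent $\alpha=s-k/2-1+2/\mu(x)$, which yields the desired lower bound on $\beta_{k,s}(x)$.

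\emph{Proof of (ii).} Since $\mu(x)=2$, part (i) already gives $\beta_{k,s}(x)\geq s-k/2$, so what is needed is the reverse inequality. The strategy is to exhibit a sequence $(j_n,l_n)$ of scales and positions for which the wavelet coefficients $|c_{j_n,l_n}|$ are \emph{bounded below} at the critical order, ruling out higher regularity by the converse direction of Jaffard's criterion. The hypothesis~(\ref{an15}) ensures that, at infinitely many convergents $p_n/q_n$, the neighbouring denominator $q_{n+1}=a_{n+1}q_n+q_{n-1}$ stays within a bounded ratio of $q_n$. Consequently the modular transformation $\gamma_n$ carrying $p_n/q_n$ to $\infty$ maps the relevant portion of the deformed contour into a fixed compact subset of $\uhp$. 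Since $M_k$ has only finitely many zeros modulo $SL_2(\Z)$, one may if necessary enlarge $N$ or pass to a subsequence to ensure that this compact set lies in the non-vanishing locus of $M_k$, giving the required pointwise lower bound on $M_k(\gamma_n z)$ and hence on $|c_{j_n,l_n}|$.

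The main obstacle lies in (ii). In the non-cusp case, $r_0\neq 0$ provides a uniform lower bound at every cusp simultaneously, making the matching lower bound on coefficients essentially immediate. In the cusp form setting one must carefully choose the subsequence of convergents so that the rescaled integrand stays away from the zeros of $M_k$, and simultaneously ensure that interference between contributions of different convergents does not cancel the main term. Both hypotheses in (ii) enter together here: $\mu(x)=2$ confines the approximation exponents $\kappa_n$ close to $2$, so that contributions from distinct convergents are well separated in scale, while~(\ref{an15}) keeps the trajectories $\gamma_n z$ within a compact set. Quantitatively tracking the constants through the wavelet coefficient estimate at the critical order is the most delicate technical step.
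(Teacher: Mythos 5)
Your overall strategy is the right one and matches the paper's in spirit (wavelet criterion of Jaffard, modular transformation at the convergents, cusp-form decay for the upper bound, and a compactness/non-vanishing argument for the lower bound, with (\ref{an15}) giving a subsequence along which $q_{n+1}/q_n$, hence $q_n^{\kappa_n-2}$, stays bounded). But your plan discards the one device that makes the paper's proof work: by Lemma~\ref{lem:wavtmk}, the continuous transform against the specific analytic wavelet $\psi_s$ is \emph{exactly} $\widehat{C}a^sM_k(b+ia)$, a single point evaluation of $M_k$, so both upper and lower bounds on the transform reduce to pointwise estimates of $|M_k|$ (Claim~\ref{claim3} and the analogue of Proposition~\ref{propkk}, stated as Proposition~\ref{modformprop}). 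Replacing this by coefficients against a generic smooth wavelet basis, written as contour integrals of $M_k$ ``deformed past the convergents'', reintroduces precisely the oscillation/cancellation problem that the exact formula eliminates. For part (i) this is harmless (an upper bound only needs $|M_k|$, and the sup-norm bound $|M_k(x+iy)|\leq Cy^{-k/2}$ you quote would in the paper's framework even give $\beta_{k,s}(x)\geq s-\frac{k}{2}$ directly), but part (ii) is the part with real content, and there you must bound a wavelet coefficient \emph{below}. You acknowledge the possible ``interference between contributions of different convergents'' and defer it as ``the most delicate technical step''; no argument is given that the main term survives, so the inequality $\beta_{k,s}(x)\leq s-\frac{k}{2}$ is not established. (Also, condition (\ref{labelno2}) is not about the sum over approximants being dominated by the nearest convergent: it is exactly what guarantees $\frac{2}{\nu(x)-\delta}<s-\frac{k}{2}-1+\frac{2}{\mu(x)+\delta}$, i.e.\ the hypothesis $\alpha'<\alpha$ in the converse part of Proposition~J; and $\mu(x)=2$ is needed not to ``separate scales'' but because the provable lower bound carries the exponent $s-k+k/\kappa_n$ of Proposition~\ref{modformprop}(ii), which matches the upper bound $s-\frac{k}{2}-1+\frac{2}{\kappa_n}$ only when $\kappa_n\to2$.)

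The non-vanishing step in your (ii) is also handled incorrectly. You cannot ``enlarge $N$'': it is supplied by hypothesis (\ref{an15}), not chosen. Nor does passing to a subsequence, or the fact that $M_k$ has finitely many zeros modulo $SL_2(\Z)$, ensure that the fixed compact subset of $\uhp$ hit by $\gamma_n\cdot(b+ia)$ avoids the zero set: a compact set can perfectly well contain zeros of $M_k$. What the paper uses instead is a free parameter in the test point: taking $b=x$, $a=D/q_n^{\kappa_n}$, the images $\gamma_n\cdot(b+ia)$ converge along the chosen subsequence to $L_1-\frac{1-iD}{D^2+1}L_0$, and since $M_k$ is holomorphic and not identically zero, its zeros are isolated, so one may pick $D=D_0$ with $M_k\left(L_1-\frac{1-iD_0}{D_0^2+1}L_0\right)\neq0$ and then use a neighbourhood lower bound as in (\ref{Eknonzero}); this is exactly the second case of the proof of Proposition~\ref{propkk}(ii), which carries over because (\ref{an15}) forces the bounded-$q_n^{\kappa_n-2}$ regime. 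Your sketch contains no analogue of this choice, so the required lower bound on $|M_k(\gamma_n z)|$, and with it the optimality claim, remains unproved.
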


\begin{rmk}
For all $s>\frac{k}{2}+1$ this condition 
is satisfied for almost all $x$. 
Let $\pi_i(x,n)=\frac{1}{n}|\{1\leq j\leq n|a_j=i \}|$ denote the frequency of appearance of $i$ among the first $n$ partial quotients of $x$. It is well-known that for almost all $x$ we have $\lim_{n\to\infty} \pi_i(x,n)= \frac{1}{\log(2)}\log(1+ \frac{1}{i(i+2)})$, see \cite[p. 225]{IK}. In particular, Condition~(\ref{an15}) is also satisfied for almost all $x$.
\end{rmk}

Theorems \ref{thm:MknotcuspHE} and \ref{mainthm3} remain valid if we replace the sine series $M_{k,s}(x)=\sum_{n=1}^{\infty}\frac{r_n}{n^s}\sin(2\pi nx)$ with the cosine series $\sum_{n=1}^{\infty}\frac{r_n}{n^s}\cos(2\pi nx)$.

\medskip

Our method does not enable us to compute H\"{o}lder regularity exponents at rational points, and if the H\"{o}lder regularity exponent at $x$ is 
a natural number $\alpha$, we do not know if the function is actually $\alpha$ times differentiable at $x$. However, the approach presented by Itatsu 
in \cite{I} seems to give some complementary information, see \cite{P}. 

More information about the local behaviour of $M_{k,s}$ could be obtained by the study of its local oscillations, {\em chirps}-like behaviours (see \cite{J1, JM}). Also, further study could include considering two-microlocal spaces $C^{\alpha,\alpha'}$ instead of $C^\alpha$ (see \cite{JM, O}).
Differentiability and H\"{o}lder regularity of series of this type was also studied by Chamizo in \cite{Ch}. In this paper, he studied the series arising from automorphic forms $f(x)=\sum_{n=0}^{\infty}r_ne^{2\pi inx}$ of positive weights $k$ under a Fuchsian group with a multiplier system: $f_s(x)=\sum_{n=1}^{\infty}\frac{r_n}{n^s}e^{2\pi inx}$. His method is based on the theory of automorphic forms. Assuming that $f$ is a cusp form, Chamizo proved that $f_{s}$ is not differentiable at any irrational $x$ if $s<\frac{k}{2}+1$, and if $\frac{k+1}{2}<s<\frac{k}{2}+1$, then $f_{s}$ is differentiable at all rational points. Moreover, it follows from \cite[Theorem~2.1]{Ch} that the H\"{o}lder regularity exponent of $f_s$ at irrational points is equal to $s-\frac{k}{2}$ for all $\frac{k}{2}<s<\frac{k}{2}+1$.

\section{Wavelet transform}\label{swavelets}

\subsection{Wavelets and regularity of functions}

We define the transform of an $L^{\infty}$ function $f$ with respect to the wavelet $\psi\in L^1(\R)$ as follows:
$$C(a,b)(f)=\frac{1}{a}\int_{\R}f(t)\overline{\psi}\left(\frac{t-b}{a}\right)dt,$$
where $\overline{\psi}$ denotes the complex conjugate of $\psi$, $a>0$, and $b\in\R$.
On the other hand, we can reconstruct the function from its wavelet transform, using the formula:
$$f(t)=\int_{0}^\infty \frac{da}{a^2}\int_\R g\left(\frac{t-b}{a}\right)C(a,b)(f)db,$$
where $g$ is a reconstruction wavelet. A reconstruction wavelet is a function that depends on $\psi$, but it is not unique, in some cases we can have $g=\psi$, the conditions which $g$ must satisfy are given in \cite[(2.1)]{HT}. In the last 20 years, it has been established that wavelets, which originate from applied mathematics, can be very useful in the analysis of pointwise regularity. Apart from the paper by Holschneider and Tchmitchian \cite{HT}, we should mention monographs by St\'{e}phane Jaffard and Yves Meyer \cite{JM}, \cite{M} in which they describe in detail the connection between wavelets and regularity. Also Oppenheim in his thesis \cite{O} applied wavelet theory in his study of regularity of a two-dimensional analogue of Riemann series (\ref{riems}). 
For background information about wavelets, we refer the reader to the book by Ingrid Daubechies, ``Ten Lectures on Wavelets'' \cite{Da}, chapter 2 is especially relevant for this paper. 

We will denote the Fourier Transform of a function $g$ by $\hat{g}(\xi)=\int_\R g(x)e^{-ix\xi}dx$.
We now recall Proposition~1 from \cite{J1}. 

\begin{prop*}
Let $\alpha>0$, and $m=[\alpha]$ its integer part. Assume the following:
\begin{enumerate}
 \item $|\psi(x)|+|\psi^{(1)}(x)|+...+|\psi^{(m+1)}(x)|\leq \frac{c}{(1+|x|)^{m+2}}$, for some constant $c$ that may depend on $m$ only;
 \item $\int_\R \psi(x)dx=\int_\R x\psi(x)dx=...=\int_\R x^m\psi(x)dx=0$;
 \item $\hat{\psi}(\xi)=0$ if $\xi<0$;
 \item $\int_0^\infty |\hat{\psi}(\xi)|^2\frac{d\xi}{\xi}<\infty$.
\end{enumerate}
Let $a\in(0,1)$, $b\in\R$. If $f :  \R\to\R \in C^{\alpha}(x_0)$, then for some $C$ that depends at most on $x_0$ and $f$, we have
$$|C(a,b)(f)|\leq Ca^{\alpha}\left(1+\frac{|b-x_0|}{a}\right)^{\alpha}.$$
Conversely, if for some $C$ that depends at most on $x_0$ and $f$ we have
$$|C(a,b)(f)|\leq Ca^{\alpha}\left(1+\frac{|b-x_0|}{a}\right)^{\alpha'} \textnormal{ for an } \alpha'<\alpha,$$
as $b\to x_0$ and $a\to 0$, then $f \in C^{\alpha}(x_0)$.
\end{prop*}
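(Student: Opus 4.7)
The plan is to establish the two implications separately: the direct one exploits the vanishing moments and decay of $\psi$ to show that only a Hölder-type remainder contributes to $C(a,b)(f)$, while the converse uses the reconstruction formula to read off the Taylor polynomial and remainder directly from the wavelet coefficients.

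For the direct implication, I would write $f(t) = P(t-x_0) + R(t)$ with $\deg P \leq m$ and $|R(t)|\leq C|t-x_0|^\alpha$, and use linearity of $C(a,b)$. After the change of variable $u=(t-b)/a$, the polynomial contribution $\int_\R P(au + (b-x_0))\overline{\psi}(u)\,du$ expands into a polynomial in $u$ of degree $\leq m$ with coefficients depending on $a,b$; by hypothesis (2), every monomial integrates to $0$, so $C(a,b)(P)=0$. For the remainder, hypothesis (1) gives
$$|C(a,b)(R)| \leq C\int_\R |au + (b-x_0)|^\alpha \, \frac{du}{(1+|u|)^{m+2}}.$$
Splitting $|au+(b-x_0)|^\alpha \leq 2^\alpha(a^\alpha|u|^\alpha + |b-x_0|^\alpha)$ and using $m+2>\alpha+1$ to make the resulting integrals converge, one obtains $|C(a,b)(R)|\leq C(a^\alpha + |b-x_0|^\alpha)\leq 2Ca^\alpha(1+|b-x_0|/a)^\alpha$, which is the desired estimate.

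For the converse, I would feed the assumed estimate on $C(a,b)(f)$ into the reconstruction formula
$$f(t) = \int_0^\infty \frac{da}{a^2}\int_\R g\!\left(\frac{t-b}{a}\right)C(a,b)(f)\,db,$$
and split the outer integral at $a=1$. The large-scale piece $\int_1^\infty$ depends smoothly on $t$ near $x_0$ and so contributes only to the Taylor polynomial. For the small-scale piece $a\in(0,1]$, I would define $P(t-x_0)$ as the order-$m$ Taylor jet at $t=x_0$ of the corresponding integrand, and estimate the remainder $f(t)-P(t-x_0)$ by dissecting $(a,b)$-space into regions according to whether $a\leq |t-x_0|$ or not and whether $|b-x_0|$ is comparable to $|t-x_0|$ or much larger. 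In each region, using the hypothesized bound on $C(a,b)(f)$, the decay of $g$, and (in the region where $|b-x_0|$ is large) the vanishing moments of $g$ after subtracting an appropriate Taylor polynomial of $g(\cdot)$, one combines the pieces via dyadic summation in $a$ to obtain $|f(t)-P(t-x_0)|\leq C|t-x_0|^{\alpha''}$ for every $\alpha''<\alpha$; taking the supremum over such $\alpha''$ yields $f\in C^\alpha(x_0)$ by definition.

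The main obstacle is the converse: since the bound on $C(a,b)(f)$ grows like $(1+|b-x_0|/a)^{\alpha'}$ for $b$ far from $x_0$, the $b$-integral in the reconstruction formula is not absolutely convergent without invoking the vanishing moments of $g$, and extracting the sharp exponent $\alpha$ rather than something worse requires a careful balancing of the $b$-growth against the $a$-decay. Hypothesis (3) is essential here, as the analyticity of $\hat\psi$ on the positive axis guarantees the existence of a reconstruction wavelet $g$ with arbitrarily many vanishing moments and sufficient localization. Since this is exactly Jaffard's Proposition~1 in \cite{J1}, the most economical route for the present paper is to cite that proof for the delicate converse and reproduce only the direct implication in detail.
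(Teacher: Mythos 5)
The paper itself does not prove this statement: it is recalled verbatim as Proposition~1 of Jaffard \cite{J1} and used as a black box, so your closing suggestion to cite \cite{J1} is exactly what the paper does, and no in-paper argument exists to compare against. Your sketch of the direct implication is essentially the standard (and correct) argument: kill the Taylor polynomial with the vanishing moments (2), and bound the remainder using the decay (1), with convergence guaranteed by $m\leq\alpha<m+1$. One small point to make explicit: the hypothesis $f\in C^{\alpha}(x_0)$ only controls $f(t)-P(t-x_0)$ \emph{near} $x_0$, so for the global integral you must also use that $f\in L^{\infty}$ and that $P$ grows at most like $|t-x_0|^{m}\leq(1+|t-x_0|)^{\alpha}$, which gives $|R(t)|\leq C(1+|t-x_0|)^{\alpha}$ on all of $\R$; with that, your estimate goes through.

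The converse, however, contains a genuine logical gap as you have written it. You claim to obtain $|f(t)-P(t-x_0)|\leq C|t-x_0|^{\alpha''}$ for every $\alpha''<\alpha$ and then conclude $f\in C^{\alpha}(x_0)$ ``by taking the supremum over such $\alpha''$.'' That inference is false: membership in $C^{\alpha''}(x_0)$ for all $\alpha''<\alpha$ only shows the H\"older exponent is $\geq\alpha$, not that $f\in C^{\alpha}(x_0)$ (the paper's own example $x\mapsto x\log x$ at $0$ illustrates that the endpoint class can fail even when all smaller exponents hold, and the constants $C=C(\alpha'')$ may blow up as $\alpha''\to\alpha$). Jaffard's actual proof gets the endpoint exponent $\alpha$ directly: the strict inequality $\alpha'<\alpha$ is precisely what makes the dyadic sums over scales converge geometrically with the full power $|t-x_0|^{\alpha}$, with no loss. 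So if you intend to reproduce the converse rather than cite it, the final step must be reorganized to produce the exponent $\alpha$ itself; as it stands the argument proves a strictly weaker conclusion than the one stated (although that weaker conclusion would in fact suffice for the way the proposition is applied in this paper, where only the supremum $\alpha_{k,s}(x)$ is computed).
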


\subsection{The wavelet $\psi_s$}\label{subs22}
In this paper, we will work with the principal branch $-\pi <\arg(z) \leq \pi$ of $z\in\C$. 
For $s>0$ and $x\in\R$, consider $$\psi_s(x)=\frac{1}{(x+i)^{s+1}}.$$ 
We now show that $\psi_s$ satisfy the assumptions 1-4 of Proposition~J.
We start by noting the following facts that will be used later.

\begin{lem}\label{sint}
 Let $\rho>0$, $z\in \C\setminus\R$. We have
$$ \int_\R \frac{e^{it}}{(t-z)^\rho}dt=\left\{
     \begin{array}{lr}
       \widetilde{c}(\rho)e^{i z} & \text{if} \quad \im( z)>0, \\
       0 & \text{if} \quad \im(z)<0,
     \end{array}
   \right. $$
with $\widetilde{c}(\rho)=\frac{2\pi e^{i\pi \rho/2}}{\Gamma(\rho)}$.
\end{lem}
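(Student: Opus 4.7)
The plan is to split by the sign of $\im(z)$, since this decides whether the branch singularity of $(t-z)^{\rho}$ lies on the side of the real axis where $e^{it}$ decays or grows.

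\textbf{Case $\im(z)<0$.} For every $t$ in the closed upper half-plane one has $\im(t-z)>0$, so $t-z$ stays away from the principal cut $(-\infty,0]$ and $f(t):=e^{it}/(t-z)^{\rho}$ extends holomorphically there. Close the real line with a large upper semicircle of radius $R$. Since $\rho>0$ and $|e^{it}|$ decays exponentially in $\im(t)$, Jordan's lemma kills the arc as $R\to\infty$, and Cauchy's theorem gives $0$.

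\textbf{Case $\im(z)>0$.} Write $z=x_0+iy_0$ with $y_0>0$ and translate: it suffices to compute $I:=\int_{\R}e^{i\tau}/(\tau-iy_0)^{\rho}\,d\tau$ and multiply by $e^{ix_0}$. The algebraic key is the factorization $\tau-iy_0=-i(y_0+i\tau)$; because $y_0+i\tau$ sits in the open right half-plane, the principal arguments add within $(-\pi,\pi]$, so $(\tau-iy_0)^{\rho}=e^{-i\pi\rho/2}(y_0+i\tau)^{\rho}$. Since $\re(y_0+i\tau)>0$, the Gamma representation
$$\frac{1}{(y_0+i\tau)^{\rho}}=\frac{1}{\Gamma(\rho)}\int_{0}^{\infty}u^{\rho-1}e^{-u(y_0+i\tau)}\,du$$
identifies $1/(\tau-iy_0)^{\rho}$ as the Fourier transform $\widehat{H}(\tau)$ (in the paper's convention) of the $L^{1}$ function $H(u)=\frac{e^{i\pi\rho/2}}{\Gamma(\rho)}u^{\rho-1}e^{-uy_0}\mathbf{1}_{\{u>0\}}$. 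Fourier inversion evaluated at $u=1$ then produces $I=2\pi H(1)=\frac{2\pi e^{i\pi\rho/2}}{\Gamma(\rho)}e^{-y_0}$, and multiplying by $e^{ix_0}$ turns $e^{-y_0+ix_0}$ into $e^{iz}$, yielding $\widetilde{c}(\rho)\,e^{iz}$ as claimed.

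\textbf{Main obstacle.} The delicate points are the bookkeeping of the non-integer branch (keeping every argument inside $(-\pi,\pi]$ throughout the factorization and the use of the Gamma integral) and the pointwise validity of Fourier inversion. The latter is automatic when $\widehat{H}\in L^{1}(\R)$, i.e.\ when $\rho>1$; for $\rho\in(0,1]$ the integral defining $I$ is only conditionally convergent and one must either regularize (inserting a factor $e^{-\varepsilon|\tau|}$ and letting $\varepsilon\to 0^{+}$) or invoke analytic continuation in $\rho$, since both sides of the asserted identity are holomorphic on $\{\re(\rho)>0\}$. In the rest of the paper the lemma is only applied with $\rho=s+1>1$, so the clean range $\rho>1$ is already enough for all later uses.
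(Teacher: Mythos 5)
Your proof is correct, but it takes a genuinely different route from the paper's. The paper's entire argument consists of the algebraic reduction $\int_\R \frac{e^{it}}{(t-z)^\rho}\,dt = i^{\rho}\int_\R \frac{e^{it}}{(-iz+it)^\rho}\,dt$ (and the analogous one for $\im(z)<0$) followed by a citation of the tabulated integrals 6 and 7 on p.~347 of Gradshteyn--Ryzhik with $p=-1$, $\nu=\rho$, $\beta=\mp iz$; it proves nothing from scratch. You instead give a self-contained argument: contour integration with Jordan's lemma for the vanishing case $\im(z)<0$, and, for $\im(z)>0$, the factorization $(\tau-iy_0)^{\rho}=e^{-i\pi\rho/2}(y_0+i\tau)^{\rho}$ (with arguments correctly tracked inside $(-\pi,\pi]$, since $\arg(-i)+\arg(y_0+i\tau)\in(-\pi,0)$) combined with the Gamma-integral representation and Fourier inversion at $u=1$; the constant $\widetilde{c}(\rho)=2\pi e^{i\pi\rho/2}/\Gamma(\rho)$ comes out right. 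This is essentially a proof of the very table entries the paper invokes, so your version buys self-containedness at the cost of length, and you are right to flag the only delicate point: for $\rho\in(0,1]$ the integral is merely conditionally convergent and pointwise inversion needs a regularization, an Abel/Dirichlet argument, or analytic continuation in $\rho$, whereas every application in the paper has $\rho=s+1>1$ (indeed $>2$), so the clean $L^1$ range suffices. The same caveat applies tacitly to your first case (for $\rho\le1$ Jordan's lemma only yields the symmetric limit, which one must identify with the improper integral), but again this is moot for the paper's uses.
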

\begin{proof}
First assume that $\im(z)>0$. Then we have $\int_\R \frac{e^{it}}{(t-z)^\rho}dt=i^{\rho}\int_\R \frac{e^{it}}{(-iz+it)^\rho}dt.$ The result follows from \cite[Equation 6, p. 347]{GR} with $p=-1$, $\nu=\rho$ and $\beta=-iz$. On the other hand, if $\im(z)<0$, then $\int_\R \frac{e^{it}}{(t-z)^\rho}dt=e^{-i\pi\rho}i^{\rho}\int_\R \frac{e^{it}}{(iz-it)^\rho}dt.$ The result then follows from \cite[Equation 7, p. 347]{GR} with $p=-1$, $\nu=\rho$ and $\beta=iz$.
\end{proof}

Then we calculate the Fourier transform of $\psi_s$.
\begin{lem}\label{lem:ftpsis}
 For $s >1$, we have
\begin{equation*}
 \hat{\psi}_s(\xi)=
\begin{cases}
 e^{-i\pi(s+1)}\xi^s \widetilde{c}(s+1) e^{-\xi} & \text{if} \quad \xi>0,\\
 0 & \text{if} \quad \xi<0.
\end{cases}
\end{equation*}
\end{lem}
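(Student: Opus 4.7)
The plan is to reduce the Fourier integral $\hat{\psi}_s(\xi)=\int_\R \frac{e^{-ix\xi}}{(x+i)^{s+1}}\,dx$ to the model integral computed in Lemma~\ref{sint}, by a real affine change of variables and a careful use of the principal branch. The convergence is fine since $s>1$ makes $\psi_s\in L^1(\R)$. I would treat the two cases $\xi>0$ and $\xi<0$ separately.

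For $\xi>0$, I would set $t=-\xi x$, so that $e^{-ix\xi}$ becomes $e^{it}$ and the limits of integration flip once. After collecting the factor $1/\xi$ from $dx$ and pulling the positive real $\xi^{s+1}$ out of $(-t/\xi+i)^{s+1}=(-t+i\xi)^{s+1}/\xi^{s+1}$, the integral becomes
\[
\hat{\psi}_s(\xi)=\xi^{s}\int_\R \frac{e^{it}}{(-t+i\xi)^{s+1}}\,dt.
\]
The branch-handling is the only subtle point: on the principal branch, the number $-t+i\xi$ has argument in $(0,\pi)$, while $t-i\xi$ has argument in $(-\pi,0)$, and the two arguments differ by exactly $\pi$; hence $(-t+i\xi)^{s+1}=e^{i\pi(s+1)}(t-i\xi)^{s+1}$. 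Applying Lemma~\ref{sint} with $z=i\xi$ (which has positive imaginary part) and $\rho=s+1$ gives the integral value $\widetilde{c}(s+1)e^{-\xi}$, so
\[
\hat{\psi}_s(\xi)=\xi^{s}e^{-i\pi(s+1)}\widetilde{c}(s+1)e^{-\xi},
\]
as claimed.

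For $\xi<0$, I perform the same substitution $t=-\xi x$; now $-\xi>0$, so no bound flip occurs, and writing $\xi=-|\xi|$ gives cleanly $(-t/\xi+i)^{s+1}=(t+i|\xi|)^{s+1}/|\xi|^{s+1}$ without any branch ambiguity. The integral becomes
\[
\hat{\psi}_s(\xi)=|\xi|^{s}\int_\R \frac{e^{it}}{(t+i|\xi|)^{s+1}}\,dt,
\]
and here Lemma~\ref{sint} applies with $z=-i|\xi|$, whose imaginary part is negative, directly yielding $0$.

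The only step that requires any real thought is the identification of the branch in the $\xi>0$ case; everything else is a bookkeeping change of variables followed by a direct appeal to the previous lemma. I would include a brief parenthetical verifying the branch equality $(-t+i\xi)^{s+1}=e^{i\pi(s+1)}(t-i\xi)^{s+1}$ by checking $\arg(-t+i\xi)-\arg(t-i\xi)=\pi$ on the principal determination, so that the reader is not left to redo it.
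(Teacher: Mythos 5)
Your proof is correct and follows essentially the same route as the paper: a change of variables reducing $\hat{\psi}_s(\xi)$ to the integral of Lemma~\ref{sint}, with the principal-branch identity $(-t+i\xi)^{s+1}=e^{i\pi(s+1)}(t-i\xi)^{s+1}$ supplying the prefactor for $\xi>0$ and the $\im(z)<0$ case of Lemma~\ref{sint} giving $0$ for $\xi<0$. The paper states the same reduction more tersely (and keeps $\xi^s$ rather than $|\xi|^s$ in the vanishing case, which is immaterial), so your write-up is simply a more detailed version of the same argument.
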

\begin{proof}
  By definition of the Fourier Transform, we have
\begin{equation*}
 \hat{\psi}_s(\xi)=\int_\R \frac{e^{-ix\xi}}{(x+i)^{s+1}}dx
=\begin{cases}
 e^{-i\pi(s+1)}\xi^s \int_\R \frac{e^{it}}{(t-i\xi)^{s+1}}dt & \text{if} \quad \xi>0,\\
 -e^{i\pi(s+1)}\xi^s \int_\R \frac{e^{it}}{(t-i\xi)^{s+1}}dt & \text{if} \quad \xi<0.
\end{cases}
\end{equation*}
We conclude by Lemma~\ref{sint}.
\end{proof}

\noindent$\bullet$ \textit{Assumption 1}

\begin{lem}\label{lem1}
 For all $k\in\N^*$ even, $s>k$ and $x \in \R$, there exists $\delta_0>0$ such that for all $0<\delta\leq \delta_0$ we have
 $$ |\psi_s(x)|+\left|\psi_s^{(1)}(x)\right|+...+\left|\psi_s^{(m+1)}(x)\right| \leq \frac{c}{(|x|+1)^{m+2}},$$
with $m=\left[s-k+\frac{k}{\mu(x)-\delta}\right]$, for some constant $c$.
\end{lem}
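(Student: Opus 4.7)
The plan is a direct computation. I would differentiate $\psi_s$ explicitly, bound each derivative by a power of $(1+|x|)^{-1}$, and reduce the conclusion to an inequality of the form $m+1\leq s$ that can be arranged by a small choice of $\delta_0$.

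First I would compute, by induction on $j$, that
\[ \psi_s^{(j)}(x) = (-1)^j (s+1)(s+2)\cdots(s+j)\,(x+i)^{-(s+1+j)}, \]
so that $|\psi_s^{(j)}(x)| = (s+1)_j\,(1+x^2)^{-(s+1+j)/2}$, where $(s+1)_j$ denotes the rising factorial. The elementary inequality $2|x|\leq 1+x^2$ gives $(1+|x|)^2\leq 2(1+x^2)$, hence $|\psi_s^{(j)}(x)|\leq C_j (1+|x|)^{-(s+1+j)}$ with $C_j$ depending only on $s$ and $j$. Summing over $0\leq j\leq m+1$ and using $1+|x|\geq 1$, I obtain
\[ \sum_{j=0}^{m+1}|\psi_s^{(j)}(x)| \leq \frac{C}{(1+|x|)^{s+1}}. \]

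The last reduction is to turn the exponent $s+1$ into $m+2$, i.e.\ to verify $m\leq s-1$. Since $m=[s-k+k/(\mu(x)-\delta)]\leq s-k+k/(\mu(x)-\delta)$, this is equivalent to $\mu(x)-\delta\geq k/(k-1)$. Here the hypothesis $k\geq 4$ gives $k/(k-1)\leq 4/3 < 2\leq \mu(x)$, so I may take any $0<\delta_0\leq \mu(x)-k/(k-1)$, and for every $\delta\in(0,\delta_0]$ the required bound holds with $c=\sum_{j=0}^{m+1}C_j$.

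I expect no real obstacle here: the analytic content is a one-line Leibniz computation and the only arithmetic ingredient is the elementary margin $\mu(x)\geq 2 > k/(k-1)$. The main potential source of confusion in reading the statement is that the letter $x$ plays two roles — the argument of $\psi_s$, where the bound must hold for all real values, and the fixed irrational point through which the integer $m$ is determined — but once these roles are disentangled the proof becomes transparent.
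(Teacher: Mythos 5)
Your computation of the derivatives, the bound $\frac{1}{|x+i|}\leq\frac{\sqrt{2}}{1+|x|}$, and the reduction of the whole lemma to the single inequality $m\leq s-1$ coincide with the paper's own proof. The gap is in the very last arithmetic step. The lemma is stated for \emph{all} even $k\in\N^*$, so it includes $k=2$ (and the paper does need the $k=2$ case later, for the quasi-modular series $E_{2,s}$ in Theorem~\ref{mainthm1}); there is no hypothesis ``$k\geq4$'' available to you here. To force $m\leq s-1$ you discard the integer part and require the real-number inequality $s-k+\frac{k}{\mu(x)-\delta}\leq s-1$, i.e.\ $\mu(x)-\delta\geq\frac{k}{k-1}$. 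For $k=2$ this reads $\mu(x)-\delta\geq 2$, which is impossible whenever $\mu(x)=2$ --- the case of almost every $x$ --- so your admissible range $0<\delta_0\leq\mu(x)-\frac{k}{k-1}$ is empty and the proof collapses exactly there.

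The repair is to keep the floor, as the paper does: $m=\left[s-k+\frac{k}{\mu(x)-\delta}\right]\leq s-1$ already follows from the strict inequality $s-k+\frac{k}{\mu(x)-\delta}<[s]$, i.e.\ $\{s\}-k+\frac{k}{\mu(x)-\delta}<0$, i.e.\ $\mu(x)-\delta>\frac{k}{k-\{s\}}$. Since $\{s\}<1$, one has $\frac{k}{k-\{s\}}<\frac{k}{k-1}\leq 2\leq\mu(x)$ for every even $k\geq2$, so any $\delta_0$ with $0<\delta_0<\mu(x)-\frac{k}{k-\{s\}}$ (this is the paper's condition $\delta_0<\frac{k\mu(x)-k-\{s\}\mu(x)}{k-\{s\}}$) works uniformly in $k$, including $k=2$ with $\mu(x)=2$. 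With that single modification your argument is complete and is essentially the argument in the paper; everything else in your proposal (the explicit derivative formula, the passage from $|x+i|$ to $1+|x|$, the observation that only the $j=0$ term is critical, and the disentangling of the two roles of $x$) is correct.
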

\begin{proof}
For all $x \in \R$, from $ (|x|-1)^2 \geq 0$ we get
\begin{equation*}
\frac{2^{1/2}}{|x|+1} \geq \frac{1}{(|x|^2+1)^{1/2}}= \frac{1}{|x+i|}.
\end{equation*}
Then we note that for all $n\in\N^*$ we have
$$\psi_s^{(n)}(x)=\frac{(s+1)(s+2)...(s+n)}{(x+i)^{s+1+n}}.$$
If $\delta_0\leq 1$, then $s+1+n\leq m+2$, we have
$$|\psi_s^{(n)}(x)|\leq \frac{c}{|x+1|)^{m+2}},$$
for all $n\in\N^*$ for some constant $c$. It suffices to show now that $ |\psi_s(x)| \leq \frac{1}{(|x|+1)^{m+2}}.$ 

Let $\delta_0<\frac{k\mu(x)-k-\{s\}\mu(x)}{k - \{s\}}$. Since $k-\{s\}>0$ and $k\mu(x)-k-\{s\}\mu(x)>0$ for all $x\in\R, s>k$ and $k\geq 2$, we have $\delta_0>0$. It follows that $\{s\}+\frac{(1-k)(\mu(x)-\delta_0)+k}{\mu(x)-\delta_0}<1$, therefore $ \left[s-k+\frac{k}{\mu(x)-\delta}\right] \leq s-1$ for all $\delta\leq \delta_0$, which completes the proof of the Lemma.
\end{proof}

\noindent$\bullet$ \textit{Assumption 2}

\begin{lem}\label{lem2}
 For $s >1$ and $\alpha<s$, we have
 $$\int_\R \psi_s(x)dx=\int_\R x\psi_s(x)dx=...=\int_\R x^m\psi_s(x)dx=0,$$
with $m=[\alpha]$.
\end{lem}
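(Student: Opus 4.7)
The plan is to prove the vanishing of the moments by contour integration, exploiting the fact that the only singularity of $\psi_s(z) = 1/(z+i)^{s+1}$ sits at $z = -i$, which lies below the real axis. For any fixed integer $j$ with $0 \leq j \leq m$, the function $z \mapsto z^j/(z+i)^{s+1}$ is holomorphic on the closed upper half plane $\{\im(z) \geq 0\}$, since $z+i$ has imaginary part at least $1$ there and the principal branch of $(z+i)^{s+1}$ is therefore well-defined and holomorphic on this region.

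The key step is to integrate over the standard semicircular contour $\Gamma_R = [-R,R] \cup C_R$, where $C_R = \{Re^{i\theta} : 0 \leq \theta \leq \pi\}$. By Cauchy's theorem,
$$\int_{-R}^R \frac{x^j}{(x+i)^{s+1}}\,dx = -\int_{C_R} \frac{z^j}{(z+i)^{s+1}}\,dz.$$
For $R \geq 2$, points $z \in C_R$ satisfy $|z+i| \geq R-1 \geq R/2$, so $\bigl|z^j/(z+i)^{s+1}\bigr| \leq 2^{s+1} R^{j-s-1}$; combined with the length $\pi R$ of $C_R$, the right-hand side is bounded in absolute value by $\pi \cdot 2^{s+1} \cdot R^{j-s}$. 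Since $j \leq m \leq \alpha < s$ we have $j - s < 0$, so letting $R \to \infty$ gives $\int_\R x^j \psi_s(x)\,dx = 0$, as required.

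There is no substantive obstacle: one only needs to check that $x^j \psi_s(x)$ is integrable on $\R$ (which follows from $|x^j\psi_s(x)| \sim |x|^{j-s-1}$ at infinity with $j-s-1<-1$ thanks to $j<s$) and that the principal branch of $(z+i)^{s+1}$ extends holomorphically across the upper half plane, both of which are immediate. As an alternative route, the same conclusion can be read off from Lemma~\ref{lem:ftpsis}: since $\hat{\psi}_s$ vanishes on $(-\infty,0)$ and behaves like a constant times $\xi^s$ near $0^+$, all derivatives of $\hat{\psi}_s$ of order $j < s$ exist and vanish at the origin, and each moment $\int_\R x^j \psi_s(x)\,dx$ is, up to a power of $i$, equal to $\hat{\psi}_s^{(j)}(0)$.
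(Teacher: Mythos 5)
Your proof is correct, but your primary argument is genuinely different from the paper's. The paper proves the vanishing of the moments by recycling Lemma~\ref{lem:ftpsis}: writing $\hat{\psi}_s^{(n)}(\xi)=\int_\R(-ix)^n e^{-ix\xi}\psi_s(x)\,dx$, it observes that this derivative equals a constant times $(\xi^s e^{-\xi})^{(n)}$ for $\xi\geq 0$ and is identically $0$ for $\xi\leq 0$, so it vanishes at $\xi=0$, which is exactly the $n$-th moment up to a power of $i$; this is the route you only sketch at the end as an alternative (and there your remark that the existence of $\hat{\psi}_s^{(j)}(0)$ for $j<s$ needs a word of justification is the same point the paper handles via continuity of $\hat{\psi}_s^{(n)}$ and its vanishing on $\xi\leq 0$). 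Your main argument instead pushes the contour $[-R,R]\cup C_R$ into the upper half plane: since $s+1$ is real, $\bigl|(z+i)^{s+1}\bigr|=|z+i|^{s+1}$, the integrand $z^j/(z+i)^{s+1}$ is holomorphic on $\{\im(z)>-1\}$, and your estimate $\pi\cdot 2^{s+1}R^{j-s}\to 0$ (using $j\leq m\leq\alpha<s$) together with the integrability check $|x^j\psi_s(x)|\sim|x|^{j-s-1}$ closes the argument. What your route buys is self-containedness and elementarity: it does not rely on the explicit Fourier transform (hence not on the table-of-integrals computation behind Lemma~\ref{sint}), and it sidesteps differentiation under the integral sign entirely. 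What the paper's route buys is economy within its own architecture: having already computed $\hat{\psi}_s$ for Assumptions 3 and 4, the moment conditions come essentially for free. Both correctly use the same hypothesis $m\leq\alpha<s$ at the decisive step.
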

\begin{proof}
Set $\hat{\psi}_s(0)=0$, since $s>0$, by Lemma~\ref{lem:ftpsis} $\hat{\psi}_s$ is a continuous function. Then for all $n<s$, we have
$$\hat{\psi}^{(n)}_s(\xi)= \int_\R \frac{(-ix)^n e^{-ix\xi}}{(x+i)^{s+1}}dx
=
\begin{cases}
 \widetilde{c}(n,s)(\xi^s e^{-\xi})^{(n)} & \text{if} \quad \xi\geq 0,\\
 0 & \text{if} \quad \xi\leq0,
\end{cases}$$
for some constant $\widetilde{c}(n,s)$. In particular, the function is $0$ at $\xi=0$. As $\alpha<s$, it follows that for all $n\leq m$, we have
$$\int_\R \frac{x^n}{(x+i)^{s+1}}dx=0.$$
\end{proof}

\noindent$\bullet$ \textit{Assumption 3}

\begin{lem}\label{lem3}
 Let $\hat{\psi}_s$ be the Fourier transform of $\psi_s$.
If $\xi <0$ then
$$\hat{\psi}_s(\xi)=0.$$
\end{lem}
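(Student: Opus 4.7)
The statement is essentially a restatement of the second case of Lemma~\ref{lem:ftpsis}, which already gave the explicit formula
\[
\hat\psi_s(\xi)=\begin{cases} e^{-i\pi(s+1)}\xi^s\,\widetilde c(s+1)\,e^{-\xi} & \xi>0,\\ 0 & \xi<0.\end{cases}
\]
So the cleanest proof plan is simply to quote Lemma~\ref{lem:ftpsis}: the $\xi<0$ branch of that formula is exactly what is to be shown. This is what I would write down.

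If one prefers a proof that does not invoke Lemma~\ref{lem:ftpsis} (and hence Lemma~\ref{sint}) as a black box, the self-contained argument is a short contour integration. Starting from $\hat\psi_s(\xi)=\int_\R (x+i)^{-s-1} e^{-ix\xi}\,dx$, I would first note that for $\xi<0$ the exponential factor satisfies $|e^{-ix\xi}|=e^{\xi\,\im(x)}\to 0$ as $\im(x)\to+\infty$. Next I would check that the integrand is holomorphic in the open upper half-plane: with the principal branch fixed in Subsection~\ref{subs22}, the only singularity of $(x+i)^{-s-1}$ is at $x=-i$, and the branch cut is the horizontal ray $\{x:\im(x)=-1,\ \re(x)\le 0\}$, both of which lie in $\{\im(x)<0\}$.

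I would then close the contour by a large semicircle $\Gamma_R$ in the upper half-plane. Since $s>1$, one has $|(x+i)^{-s-1}|=O(R^{-s-1})$ on $\Gamma_R$ while $|e^{-ix\xi}|\le 1$ along the real axis and decays as $\im(x)$ grows, so Jordan's lemma (or a direct $ML$-estimate) makes the contribution of $\Gamma_R$ vanish as $R\to\infty$. Cauchy's theorem applied to the closed contour then gives $\hat\psi_s(\xi)=0$.

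There is no genuine obstacle here; the statement is essentially trivial once Lemma~\ref{lem:ftpsis} is in place. The only thing to be careful about is correctly locating the branch cut of the principal branch of $(x+i)^{s+1}$ in the lower half-plane, so that the contour closure in the upper half-plane really does enclose a region of holomorphy.
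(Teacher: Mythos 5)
Your primary argument is exactly what the paper does: its proof of Lemma~\ref{lem3} simply observes that the claim follows from (the first part of the proof of) Lemma~\ref{lem:ftpsis}, whose statement already contains the vanishing for $\xi<0$. Your supplementary contour-integration sketch is also correct --- the branch cut of $(x+i)^{s+1}$ and the pole at $x=-i$ lie in the lower half-plane and the semicircular contribution is $O(R^{-s})$ --- but it is an optional self-contained alternative rather than a different approach to what the paper records.
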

\begin{proof}
It follows from the first part of the proof of Lemma~\ref{lem:ftpsis}.
\end{proof}

\noindent$\bullet$ \textit{Assumption 4}

\begin{lem}\label{lem4}
 We have
$$\int_0^{\infty} |\hat{\psi}_s(\xi)|^2\frac{d\xi}{\xi}<\infty.$$
\end{lem}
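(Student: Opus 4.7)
The plan is to use Lemma~\ref{lem:ftpsis}, which gives an explicit formula for $\hat{\psi}_s$, and then recognize the resulting integral as a Gamma integral.

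By Lemma~\ref{lem:ftpsis}, for $\xi > 0$ we have $\hat{\psi}_s(\xi) = e^{-i\pi(s+1)}\widetilde{c}(s+1)\,\xi^s e^{-\xi}$, so that
\[
|\hat{\psi}_s(\xi)|^2 = |\widetilde{c}(s+1)|^2\,\xi^{2s} e^{-2\xi}.
\]
Substituting into the integral I want to bound, the weight $d\xi/\xi$ shifts the exponent of $\xi$ by $-1$, giving
\[
\int_0^\infty |\hat{\psi}_s(\xi)|^2 \frac{d\xi}{\xi} = |\widetilde{c}(s+1)|^2 \int_0^\infty \xi^{2s-1} e^{-2\xi}\, d\xi.
\]

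The second step is to check convergence of this last integral. At $\xi = 0$, since $s > 1$ (in fact $s > k \geq 4$) we have $2s - 1 > 0$, so $\xi^{2s-1}$ is integrable near the origin. At $\xi = \infty$, the factor $e^{-2\xi}$ dominates any polynomial growth, so the integral converges. The change of variable $u = 2\xi$ then identifies the integral as $\Gamma(2s)/2^{2s}$, yielding the explicit value
\[
\int_0^\infty |\hat{\psi}_s(\xi)|^2 \frac{d\xi}{\xi} = \frac{|\widetilde{c}(s+1)|^2\,\Gamma(2s)}{2^{2s}} < \infty,
\]
which is the desired conclusion. There is no serious obstacle here; the only thing to keep an eye on is the endpoint behavior at $\xi = 0$, which is handled by the assumption $s > 1$ already in force via the hypothesis $s > k \geq 4$.
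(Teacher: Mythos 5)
Your proof is correct and follows exactly the paper's argument: apply Lemma~\ref{lem:ftpsis} to write $|\hat{\psi}_s(\xi)|^2 = |\widetilde{c}(s+1)|^2\xi^{2s}e^{-2\xi}$ and evaluate the resulting integral as $|\widetilde{c}(s+1)|^2 2^{-2s}\Gamma(2s)$. The extra remarks on endpoint behavior are fine but not needed beyond the Gamma identification.
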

\begin{proof}
By Lemma~\ref{lem:ftpsis} we have
\begin{equation*}
 \int_0^{\infty} |\hat{\psi}_s(\xi)|^2\frac{d\xi}{\xi}= |\widetilde{c}(s+1)|^2\int_0^{\infty} \xi^{2s-1} e^{-2\xi}d\xi
=|\widetilde{c}(s+1)|^2 2^{-2s}\Gamma(2s)<\infty.
\end{equation*}
\end{proof}

We have shown that $\psi_s$ fulfils the assumptions 1-4 of Proposition~J.

\subsection{Wavelet transform of $M_{k,s}$}

Before we calculate the wavelet transform, we will show the convergence of $M_{k,s}$ for certain~$s$.
\begin{lem}\label{lem:Mksconv}
 The series $M_{k,s}$ converges normally on $\R$ for all $s>k$. Moreover, if $M_k$ is a cusp form, then $M_{k,s}$ is well-defined and continuous on $\R$ for all $s>\frac{k}{2}$.
\end{lem}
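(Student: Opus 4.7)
The proof should split according to whether $M_k$ is a cusp form, since the two cases rely on different size bounds for the Fourier coefficients.

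For a general weight-$k$ modular form, the classical estimate $|r_n| \leq C n^{k-1}$ is available: it follows from $r_n = e^{2\pi n y}\int_0^1 M_k(x+iy) e^{-2\pi i n x}\,dx$ combined with the modularity-based bound $|M_k(x+iy)|=O(1+y^{-k})$ and the choice $y=1/n$. Consequently
$$\left|\frac{r_n}{n^s}\sin(2\pi n x)\right| \leq \frac{C}{n^{s-k+1}},$$
a summable majorant independent of $x$ precisely when $s>k$, yielding normal convergence on $\R$.

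For a cusp form, the invariance of $y^{k/2}|M_k(x+iy)|$ under $SL_2(\Z)$ yields the sharper Hecke bound $|r_n|=O(n^{k/2})$, which alone would give only $s>\frac{k}{2}+1$ via absolute convergence. To reach the threshold $s>\frac{k}{2}$ claimed in the lemma, I would exploit cancellation through Wilton's classical estimate
$$A_N(x) := \sum_{n=1}^{N} r_n e^{2\pi i n x} = O\!\left(N^{k/2}\right),$$
uniform in $x\in\R$. Writing $\sin(2\pi n x) = \frac{1}{2i}(e^{2\pi i n x} - e^{-2\pi i n x})$ and applying Abel summation with weights $b_n = n^{-s}$, the Cauchy tail becomes
$$\sum_{n=M+1}^{N} \frac{r_n e^{\pm 2\pi i n x}}{n^s} = \frac{A_N(\pm x)}{N^s} - \frac{A_M(\pm x)}{(M+1)^s} + \sum_{n=M+1}^{N-1} A_n(\pm x)\left(\frac{1}{n^s} - \frac{1}{(n+1)^s}\right),$$
which is bounded by $C\bigl(N^{k/2-s}+M^{k/2-s}\bigr) + C\sum_{n=M}^{N} n^{k/2-s-1}$ and tends to $0$ uniformly in $x$ as $M\to\infty$ whenever $s>\frac{k}{2}$. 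This exhibits $M_{k,s}$ as a uniform limit of continuous functions on $\R$.

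The main obstacle is the second part, where the sharper threshold $s>\frac{k}{2}$ demands honest cancellation and a bound beyond the reach of $|r_n|$ alone. Wilton's estimate is the most direct tool; a conceptual alternative is the Mellin representation
$$M_{k,s}(x) = \frac{(2\pi)^s}{2i\,\Gamma(s)}\int_0^{\infty} \bigl(M_k(x+iy) - M_k(-x+iy)\bigr) y^{s-1}\,dy,$$
whose integrand is $O(y^{s-1-k/2})$ near $y=0$ by the Hecke bound and hence integrable for $s>\frac{k}{2}$; identifying this integral with the series would then reduce to analytic continuation in $s$ from the range $s>k$, where both expressions coincide by absolute convergence, together with a uniqueness argument.
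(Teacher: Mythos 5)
Your overall plan is sound, and for the harder half of the lemma you take a genuinely different route from the paper. The paper's proof is essentially by citation: Hecke's bound $r_n=O(n^{k-1})$ for the non-cusp case, and then, after mentioning Deligne's estimate (which by itself only reaches $s>\frac{k+1}{2}$), it invokes Chamizo's Proposition~3.1 of \cite{Ch} for the full statement $s>\frac{k}{2}$. You instead make the $s>\frac{k}{2}$ threshold self-contained via cancellation: Wilton-type bounds for $\sum_{n\leq N}r_ne^{2\pi inx}$ uniformly in $x$, followed by Abel summation, which gives uniform (hence continuity-preserving) convergence of the series. That is precisely the kind of argument underlying Chamizo's proposition, so your version buys transparency at the cost of importing Wilton's theorem; note that the classical statement carries a factor $\log N$, i.e.\ $\sum_{n\leq N}r_ne^{2\pi inx}=O(N^{k/2}\log N)$, which is harmless in your partial summation since $\sum n^{k/2-s-1}\log n<\infty$ and $N^{k/2-s}\log N\to0$ for $s>\frac{k}{2}$. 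Your Mellin-integral alternative is much sketchier (identifying the integral with the pointwise sum of the series below $s=\frac{k}{2}+1$ is exactly the nontrivial point), but you correctly treat it as optional.

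One justification in the first part does not hold as written: plugging $y=1/n$ into $|M_k(x+iy)|=O(1+y^{-k})$ in the coefficient formula gives only $|r_n|\leq e^{2\pi}\sup_x|M_k(x+i/n)|=O(n^{k})$, not $O(n^{k-1})$, and with $O(n^k)$ your majorant would require $s>k+1$ rather than $s>k$. The bound $r_n=O(n^{k-1})$ itself is of course the correct classical Hecke estimate (the paper simply cites it, via \cite{S}); if you want it self-contained, decompose $M_k=r_0E_k+(\text{cusp form})$ and use $\sigma_{k-1}(n)\leq\zeta(k-1)n^{k-1}$ together with the cuspidal bound $O(n^{k/2})$, since the sup-norm growth $y^{-k}$ is attained near rationals and the $y=1/n$ trick cannot do better than $O(n^k)$. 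With that repair (or with a citation in place of the parenthetical derivation), your argument proves both assertions of the lemma.
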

\begin{proof}
By Hecke, if $M_k$ is not a cusp form, then $r_n=O(n^{k-1})$, which proves the first part of the Lemma. Then Deligne proved that if $M_k$ is a cusp form, then $r_n=O(n^{(k-1)/2+\varepsilon})$, for all $\varepsilon>0$. For details, see for example \cite[p.~153-154]{S}. Chamizo improved Deligne's result, showing that if $M_k$ is a cusp form, then $M_{k,s}$ is well-defined and continuous on $\R$ for all $s>\frac{k}{2}$, \cite[Proposition 3.1]{Ch}.
\end{proof}

We also need the following fact in order to calculate the wavelet transform of $M_{k,s}$.

\begin{lem}\label{intsin}
 Let $\rho>1$, then
$$\int_\R \frac{\sin(t)}{(t-z)^\rho}dt=
\begin{cases}
  \frac{\pi e^{i\pi (\rho-1)/2}}{\Gamma(\rho)}e^{iz} & \text{if} \quad \im(z)>0,\\
   \frac{\pi e^{-i\pi (\rho-1)/2}}{\Gamma(\rho)}e^{-iz} & \text{if} \quad \im(z)<0.
\end{cases}$$
\end{lem}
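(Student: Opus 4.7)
My plan is to reduce the integral to the exponential integrals already handled by Lemma~\ref{sint} by writing $\sin(t)=\frac{1}{2i}(e^{it}-e^{-it})$. The first piece $\int_\R \frac{e^{it}}{(t-z)^\rho}\,dt$ is covered directly: it equals $\widetilde c(\rho)e^{iz}$ when $\im(z)>0$ and $0$ when $\im(z)<0$.

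For the second piece $\int_\R \frac{e^{-it}}{(t-z)^\rho}\,dt$, I would substitute $u=-t$, rewriting it as $\int_\R \frac{e^{iu}}{(-(u+z))^\rho}\,du$. The key step is to unfold $(-(u+z))^\rho$ consistently with the principal branch $-\pi<\arg(\cdot)\le\pi$ declared in Subsection~\ref{subs22}. When $\im(z)>0$, one has $u+z$ in the open upper half-plane, i.e.\ $\arg(u+z)\in(0,\pi)$, so $\arg(-(u+z))=\arg(u+z)-\pi\in(-\pi,0)$ lies in the principal range and $(-(u+z))^\rho=e^{-i\pi\rho}(u+z)^\rho$. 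Hence the integral becomes $e^{i\pi\rho}\int_\R \frac{e^{iu}}{(u-(-z))^\rho}\,du$, and Lemma~\ref{sint} applied with $-z$ in the lower half-plane gives $0$. When $\im(z)<0$, the argument instead gets shifted by $+\pi$, producing the factor $e^{-i\pi\rho}$, and Lemma~\ref{sint} applied with $-z$ in the upper half-plane yields $e^{-i\pi\rho}\widetilde c(\rho)e^{-iz}=\frac{2\pi e^{-i\pi\rho/2}}{\Gamma(\rho)}e^{-iz}$.

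Finally I would assemble. For $\im(z)>0$, only the $e^{it}$ term survives, giving
\[
\frac{1}{2i}\cdot\frac{2\pi e^{i\pi\rho/2}}{\Gamma(\rho)}e^{iz}=\frac{\pi e^{i\pi(\rho-1)/2}}{\Gamma(\rho)}e^{iz},
\]
using $\tfrac1i=e^{-i\pi/2}$. For $\im(z)<0$, only the $e^{-it}$ term survives, and a symmetric simplification (now $-\tfrac{1}{2i}=e^{i\pi/2}/2$ times the computed value) produces $\frac{\pi e^{-i\pi(\rho-1)/2}}{\Gamma(\rho)}e^{-iz}$, matching the claimed formula.

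The only nontrivial point is the branch bookkeeping for $(-(u+z))^\rho$: one must verify that $\arg(u+z)\mp\pi$ actually stays inside the principal wedge $(-\pi,\pi]$ so that $(-w)^\rho=e^{\mp i\pi\rho}w^\rho$ is the correct identification, and this is what flips the sign of the phase between the two cases. Everything else is a direct substitution into Lemma~\ref{sint}.
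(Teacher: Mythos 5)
Your proof is correct and takes essentially the same route as the paper: both split $\sin(t)=\frac{1}{2i}(e^{it}-e^{-it})$, apply Lemma~\ref{sint} to the $e^{it}$ term, and obtain the $e^{-it}$ term via the substitution $t=-u$ with the principal-branch identification $(-(u+z))^{\rho}=e^{\mp i\pi\rho}(u+z)^{\rho}$ producing the phase factors. The branch bookkeeping and the final constants $\frac{\pi e^{\pm i\pi(\rho-1)/2}}{\Gamma(\rho)}$ in your write-up check out.
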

\begin{proof}
 Recall that by Lemma~\ref{sint}, we have
$$ \int_\R \frac{e^{it}}{(t-z)^\rho}dt=\left\{
     \begin{array}{lr}
       \widetilde{c}(\rho)e^{i z} & \text{if} \quad \im( z)>0, \\
       0 & \text{if} \quad \im(z)<0.
     \end{array}
   \right. $$
On the other hand
\begin{equation*}
 \int_\R \frac{e^{it}}{(t-z)^\rho}dt=\int_\R \frac{e^{-iu}}{(-u-z)^\rho}du=\left\{
     \begin{array}{lr}
       e^{i\pi \rho} \int_\R \frac{e^{-iu}}{(u+z)^\rho}du & \text{if} \quad \im( z)>0, \\
       e^{-i\pi \rho}\int_\R \frac{e^{-iu}}{(u+z)^\rho}du & \text{if} \quad \im(z)<0.
     \end{array}
   \right. 
\end{equation*}
thus
$$ \int_\R \frac{e^{-iu}}{(u-z)^\rho}du=\left\{
     \begin{array}{lr}
       0 & \text{if} \quad \im(z)>0,\\
      \widetilde{c}(\rho)e^{-i\pi \rho} e^{-i z} & \text{if} \quad \im(z)<0.
     \end{array}
   \right. $$
The result then follows from $\sin(u)=\frac{e^{iu}-e^{-iu}}{2i}$.
\end{proof}

Now we will calculate the wavelet  coefficients of $M_{k,s}$ with respect to the wavelet $\psi_s$. 
\begin{lem}\label{lem:wavtmk}
 The wavelet transform of $M_{k,s}$ with respect to the wavelet $\psi_s$ is
$$C_{k,s}(a,b):=C(a,b)(M_{k,s})=\widehat{C} a^s (M_k(b+ia)-r_0),$$
where $\widehat{C}=(2\pi)^s\frac{\pi e^{i\pi s/2}}{\Gamma(s+1)}$. In particular, if $M_k$ is a cusp form, then
$$C_{k,s}(a,b)=\widehat{C}a^s M_k(b+ia).$$
\end{lem}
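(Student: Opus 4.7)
The plan is a direct computation: unfold the definition of the wavelet transform, insert the Fourier series of $M_{k,s}$, interchange summation and integration, and reduce each resulting integral to Lemma~\ref{intsin}.

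First, working on the principal branch one has $\overline{\psi_s}(y) = 1/(y-i)^{s+1}$ for $y\in\R$. Substituting $y = (t-b)/a$ in the defining formula of $C(a,b)$ gives
$$C(a,b)(M_{k,s}) \;=\; \frac{1}{a}\int_\R \frac{M_{k,s}(t)}{\left(\tfrac{t-b}{a}-i\right)^{s+1}}\, dt \;=\; a^s \int_\R \frac{M_{k,s}(t)}{(t-b-ia)^{s+1}}\, dt.$$

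Second, I would plug in $M_{k,s}(t) = \sum_{n\geq 1}(r_n/n^s)\sin(2\pi n t)$ and swap sum and integral. The swap is justified by Fubini: the change of variable $u=(t-b)/a$ yields
$$\int_\R \frac{dt}{|t-b-ia|^{s+1}} \;=\; a^{-s}\int_\R \frac{du}{(u^2+1)^{(s+1)/2}} \;=\; C_s\, a^{-s} \;<\;\infty,$$
and $\sum_n |r_n|/n^s<\infty$ by Hecke's bound $r_n=O(n^{k-1})$ when $M_k$ is not a cusp form (and $s>k$), and by Deligne's bound in the cusp form case.

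Third, for each $n\geq 1$, the substitution $u=2\pi n t$ puts the $n$-th integral into the form of Lemma~\ref{intsin}:
$$\int_\R \frac{\sin(2\pi n t)}{(t-b-ia)^{s+1}}\, dt \;=\; (2\pi n)^s \int_\R \frac{\sin(u)}{(u-2\pi n(b+ia))^{s+1}}\, du.$$
Because $a>0$, the point $z_n = 2\pi n(b+ia)$ lies in the upper half-plane, so Lemma~\ref{intsin} with $\rho=s+1$ evaluates the integral to $(2\pi n)^s\, \frac{\pi e^{i\pi s/2}}{\Gamma(s+1)}\, e^{2\pi i n (b+ia)}$. Summing, the factors $n^s$ and $1/n^s$ cancel, and the remaining series $\sum_{n\geq 1} r_n e^{2\pi i n(b+ia)}$ is exactly $M_k(b+ia)-r_0$, producing the announced formula with $\widehat{C}=(2\pi)^s \pi e^{i\pi s/2}/\Gamma(s+1)$. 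When $M_k$ is a cusp form, $r_0=0$, giving the second identity immediately.

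The only step requiring any delicacy is the Fubini interchange in the cusp form case when $s$ is close to $k/2$: absolute summability of $\sum |r_n|/n^s$ needs $s>(k+1)/2$, which is slightly stronger than the convergence threshold $s>k/2$ guaranteed by Chamizo. In that borderline range one would instead approximate $M_{k,s}$ by its partial sums (which converge in an appropriate sense by \cite{Ch}), verify the formula for each partial sum (where the swap is trivial), and pass to the limit using the rapid decay $e^{-2\pi n a}$ of $M_k(b+ia)-r_0$ and uniform boundedness of the truncated integrals. For the applications in Theorems~\ref{thm:MknotcuspHE} and \ref{mainthm3} the parameter $s$ is well above this borderline, so the routine Fubini argument suffices.
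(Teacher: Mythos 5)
Your proposal is correct and follows essentially the same route as the paper: unfold the transform, interchange sum and integral, substitute $u=2\pi n t$, and evaluate each term by Lemma~\ref{intsin} with $z=2\pi n(b+ia)$ in the upper half-plane. The explicit Fubini justification and the remark about the borderline range $\frac{k}{2}<s\leq\frac{k+1}{2}$ for cusp forms are extra care that the paper's proof leaves implicit, but they do not change the argument.
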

\begin{proof}
We have
\begin{equation*}
  C(a,b)(M_{k,s})=\frac{1}{a}\int_\R M_{k,s}(x)\frac{1}{\left(\frac{x-b}{a}-i\right)^{s+1}}dx
=\frac{1}{a}\sum_{n=1}^{\infty}\frac{r_n}{n^s}\int_\R \frac{\sin(2\pi nx)}{\left(\frac{x-b}{a}-i\right)^{s+1}}dx.
\end{equation*}
Then we use the substitution $u=2\pi n x$, and we obtain 
\begin{align*}
 C(a,b)(M_{k,s})
&=\frac{1}{a}\sum_{n=1}^{\infty}\frac{r_n}{n^s}\int_\R \frac{\sin(u)}{\left(\frac{\frac{u}{2\pi n}-b}{a}-i\right)^{s+1}} \frac{du}{2\pi n}\\
&= \frac{1}{a}\sum_{n=1}^{\infty}\frac{r_n}{n^s}\int_\R \frac{a^{s+1}(2\pi)^{s+1}n^{s+1}\sin(u)}{\left(u-2\pi n b-2\pi n ai\right)^{s+1}}\frac{du}{2\pi n}\\
&= \frac{1}{a}\sum_{n=1}^{\infty}\frac{r_n}{n^s}\int_\R \frac{a^{s+1}(2\pi)^{s+1}n^{s+1}\sin(u)}{\left(u-2\pi n b-2\pi n ai\right)^{s+1}}\frac{du}{2\pi n}\\
&= (2\pi)^{s}a^s\sum_{n=1}^{\infty}r_n\int_\R \frac{\sin(u)}{\left(u-2\pi n b-2\pi n ai\right)^{s+1}}du.
\end{align*}
Then, by Lemma~\ref{intsin} we have
$$ C(a,b)(M_{k,s})
=a^s(2\pi)^s\widetilde{c}(s+1)\sum_{n=1}^{\infty}r_n e^{2\pi i n(b+ia)}=\widehat{C} a^s (M_{k}(b+ia)-r_0),$$
with $\widehat{C}=(2\pi)^s\widetilde{c}(s+1)=(2\pi)^s\frac{\pi e^{i\pi s/2}}{\Gamma(s+1)}$.
\end{proof}

\section{Estimating $C_{k,s}(a,b)$ if $M_k$ is not a cusp form}\label{sestCknoncusp}

We first estimate $|M_k(z)|$.

\begin{claim}\label{claim:sizeMknotcusp}
Let $M_k$ be a modular form, not a cusp form. There exist $r,c_1, c_2, c_3 >0$, such that:

if $\im(z)\leq r$, then $$|M_k(z)|\leq\frac{c_1}{\im(z)^{k}};$$

if $\im(z) \geq r$, then $$ c_2 \leq|M_k(z)|\leq c_3.$$
\end{claim}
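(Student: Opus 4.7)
The plan is to pick a single threshold $r$ on $\im(z)$ and treat the two regimes separately, invoking only the Fourier expansion of $M_k$ and the modular transformation law.

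For $\im(z) \geq r$, I use the $q$-expansion $M_k(z) = r_0 + \sum_{n \geq 1} r_n e^{2\pi i n z}$. Since $M_k$ is not a cusp form, $r_0 \neq 0$. Hecke's bound $|r_n| = O(n^{k-1})$ (already invoked in Lemma~\ref{lem:Mksconv}) shows the tail is dominated by a constant times $\sum_{n \geq 1} n^{k-1} e^{-2\pi n \im(z)}$, which tends to $0$ uniformly in $\re(z)$ as $\im(z) \to \infty$. I choose $r$ large enough that $|M_k(z) - r_0| \leq |r_0|/2$ for $\im(z) \geq r$; this yields $|r_0|/2 \leq |M_k(z)| \leq 3|r_0|/2$, producing $c_2$ and $c_3$.

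For $\im(z) \leq r$, I rely on modularity. I first note that $M_k$ is bounded on the standard fundamental domain $\mathcal{F} = \{w \in \uhp : |\re(w)| \leq 1/2,\ |w| \geq 1\}$: on $\mathcal{F} \cap \{\im(w) \leq r\}$ boundedness follows from continuity on a compact set, and on $\mathcal{F} \cap \{\im(w) \geq r\}$ from the previous paragraph. Let $C_0$ denote the resulting uniform bound. Given any $z$ with $\im(z) \leq r$, I reduce $z$ via some $\gamma \in SL_2(\Z)$ with entries $a,b,c,d$, so that $\gamma z \in \mathcal{F}$. The transformation law $M_k(\gamma z) = (cz+d)^k M_k(z)$ gives $|M_k(z)| \leq C_0/|cz+d|^k$. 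If $c \neq 0$, then $|cz+d|^2 \geq c^2 \im(z)^2 \geq \im(z)^2$, yielding $|M_k(z)| \leq C_0/\im(z)^k$. If $c = 0$, then $\gamma$ is a translation, so $\im(z) = \im(\gamma z) \geq \sqrt{3}/2$, and the crude bound $|M_k(z)| \leq C_0 \leq C_0 r^k/\im(z)^k$ suffices. Taking $c_1 = C_0 \max(1, r^k)$ covers both subcases.

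The main (minor) subtlety is the dichotomy $c \neq 0$ versus $c = 0$ in the reduction: the translation case forces $\im(z) \geq \sqrt{3}/2$ and is handled by absorbing a factor of $r^k$ into $c_1$. Everything else is a standard application of fundamental-domain reduction together with the elementary estimate $|cz+d| \geq |c|\im(z)$.
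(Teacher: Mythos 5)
Your argument is correct, and your treatment of the region $\im(z)\geq r$ (Hecke bound on the tail, choose $r$ so the tail is at most $|r_0|/2$) is essentially the same as the paper's. For the bound $|M_k(z)|\leq c_1\im(z)^{-k}$ when $\im(z)\leq r$, however, you take a genuinely different route: you invoke modularity, reducing $z$ to the fundamental domain $\mathcal{F}$, bounding $M_k$ on $\mathcal{F}$ (compactness plus the cusp estimate), and using $M_k(\gamma z)=(cz+d)^kM_k(z)$ together with $|cz+d|\geq|c|\im(z)$, with the $c=0$ translation case absorbed into the constant via $\im(z)\leq r$ — all of which is sound. The paper instead gets this bound without any modularity at all: from $|r_n|=O(n^{k-1})$ it estimates $|M_k(z)|\leq|r_0|+c_{1,k}\sum_{n\geq1}n^{k-1}e^{-2\pi n\im(z)}$, writes $\sum_{n\geq1}n^{k-1}x^n=P_{k-1}(x)/(1-x)^k$ with $x=e^{-2\pi\im(z)}$, and uses $e^{-y}/(1-e^{-y})^k\lesssim y^{-k}$ to conclude. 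The paper's computation is more elementary and self-contained (it would apply verbatim to any Fourier series with $O(n^{k-1})$ coefficients and $r_0\neq0$, modular or not), while your reduction-theory argument is the standard automorphic growth estimate and meshes naturally with the transformation law that the paper anyway uses later in Proposition~\ref{propkk}; either proof yields the claim with the same exponent $k$.
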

\begin{proof}
Let $r>0$ such that $|r_0|> \sum_{n=1}^\infty |r_n|e^{-2\pi n r}$. Again, by Hecke we have $r_n=O(n^{k-1})$ (see \cite[p.~153-154]{S}). Therefore, there exists $c_{1,k}>0$ such that $|M_k(z)|\leq |r_0|+c_{1,k}\sum_{n=1}^{\infty}n^{k-1}e^{-2\pi  n\im(z)}$. Then there exists a polynomial $P_{k-1}$ of degree ${k-1}$ vanishing at $0$ such that $\sum_{n=1}^{\infty}n^{k-1}e^{-2\pi n \im(z)}=\frac{P_{k-1}(e^{-2\pi \im(z)})}{(1-e^{-2\pi \im(z)})^k}$. Since $0< e^{-2\pi \im(z)}<1$, there exists $c_{2,k}>0$ such that $|P_{k-1}(e^{-2\pi \im(z)})|\leq c_{2,k}e^{-2\pi \im(z)}$. Finally, there exists $c_{3,k}>0$ such that $\frac{e^{-2\pi \im(z)}}{(1-e^{-2\pi \im(z)})^k}\leq \frac{c_{3,k}}{(2\pi \im(z))^k}$. Summing up, we get that
$$|M_k(z)|\leq |r_0|+\frac{c_{1,k}c_{2,k}c_{3,k}}{(2\pi)^k}\frac{1}{\im(z)^k}.$$
If $\im(z)\leq r$, then the first part of the Claim follows from setting $c_1= |r_0|r^k+\frac{c_{1,k}c_{2,k}c_{3,k}}{(2\pi)^k}$.

On the other hand, if $\im(z)\geq r$, then letting $c_2 = |r_0| - \sum_{n=1}^\infty |r_n|e^{-2\pi n r}$ and $c_3=|r_0|+ \sum_{n=1}^\infty |r_n|e^{-2\pi n r}$ gives the result.
\end{proof}

The following proposition is an analogue of Proposition~2 in \cite{J2}. The significant difference is that Jaffard fixes $D=3$. This is possible because in the analogue of Claim~\ref{claim:sizeMknotcusp} he can take $r=1$. We cannot do it in general. In order to be able to use the lower bound from Claim~\ref{claim:sizeMknotcusp}, we need to carefully choose $D$, as we will see in the proof of the Proposition.

\begin{prop}\label{propkk}
 Let $x\in\R\setminus\Q$. Let $a\in(0,1), b\in \R$.  
\begin{enumerate}[(i)]
 \item Let $D>1$. For each $n$, if
\begin{equation}\label{condonab}
 D\left|x-\frac{p_n}{q_n}\right| \leq |b-x+ia| \leq D\left|x-\frac{p_{n-1}}{q_{n-1}}\right|,
\end{equation}
we have either:
$$ |C_{k,s}(a,b)|\leq C a^{s-k+k/\kappa_{n-1}}\left(1+\frac{|b-x|}{a}\right)^{k/\kappa_{n-1}},$$
or
\begin{equation*}
 |C_{k,s}(a,b)|\leq C a^{s-k+k/\kappa_{n}}\left(1+\frac{|b-x|}{a}\right)^{k/\kappa_{n}},
\end{equation*}
for a constant $C$ that may depend on $k$, $s$, $x$ and $D$.
\item There exists $D_0>1$ depending at most on $k$, $s$ and $x$, and there exists $\widetilde{C}>0$ that may depend on $k$, $s$, $x$ and $D_0$ such that for infinitely many $n$,
 there exists a point $b+ia$ in the domain (\ref{condonab}) with $D=D_0$ satisfying
$$ |C_{k,s}(a,b)|\geq \widetilde{C} a^{s-k+k/\kappa_{n}}\left(1+\frac{|b-x|}{a}\right)^{k/\kappa_{n}}.$$ 
\end{enumerate}
\end{prop}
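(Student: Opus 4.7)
The strategy, following Jaffard's approach to the Riemann series, is to exploit the modular invariance of $M_k$ in order to reduce $|M_k(b+ia)|$ to $|M_k(W)|$ for a point $W$ high in the upper half-plane, where Claim~\ref{claim:sizeMknotcusp} gives two-sided control. Since $p_nq_{n-1}-p_{n-1}q_n=\pm 1$, the matrix $\gamma_n = \begin{pmatrix} p_n & p_{n-1} \\ q_n & q_{n-1} \end{pmatrix}$ lies in $GL_2(\Z)$, and because $k$ is even the transformation law for $M_k$ yields
$$M_k(z) = (q_n W + q_{n-1})^k M_k(W), \qquad W := \gamma_n^{-1}(z).$$
The M\"obius formulas then give $|q_n W + q_{n-1}|^2 = \im(W)/a$ and $\im(W) = a/(q_n^2|z-p_n/q_n|^2)$. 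In the annulus (\ref{condonab}) with $D>1$, the triangle inequality makes $|z-p_n/q_n|$ comparable to $|b-x+ia|$ up to constants depending only on $D$, so $\im(W)$ is comparable to $a/(q_n^2|b-x+ia|^2)$.

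For part~(i), I would split the analysis according to whether $\im(W) \geq r$ or $\im(W) \leq r$, where $r$ is the threshold from Claim~\ref{claim:sizeMknotcusp}. In the first case, $|M_k(W)| \leq c_3$ gives $|M_k(z)| \lesssim (\im(W)/a)^{k/2}$; in the second, the bound $|M_k(W)| \leq c_1/\im(W)^k$ yields a complementary estimate. Using the diophantine identity $q_n = |x-p_n/q_n|^{-1/\kappa_n}$ together with the annulus lower bound $|b-x+ia| \geq D|x-p_n/q_n|$, these estimates translate to the target form $Ca^{s-k+k/\kappa_n}(1+|b-x|/a)^{k/\kappa_n}$ after recognizing that $|b-x+ia|$ is comparable to $a(1+|b-x|/a)$. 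The alternative bound with $\kappa_{n-1}$ in place of $\kappa_n$ is obtained by repeating the same argument with the matrix $\gamma_{n-1}$ (whose bottom row is $(q_{n-1},q_{n-2})$) and the point $p_{n-1}/q_{n-1}$; depending on where $z$ lies in the annulus, one of the two estimates is valid, and the subtraction of $r_0$ contributes only a constant which is absorbed by the $+1$ in $(1+|b-x|/a)^{k/\kappa_n}$.

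For part~(ii), I would construct, for infinitely many $n$, a point $z_n = b_n + ia_n$ inside the annulus with $D=D_0$ such that $W_n := \gamma_n^{-1}(z_n)$ has imaginary part $\geq r$. Claim~\ref{claim:sizeMknotcusp} then gives $|M_k(W_n)| \geq c_2$ and so
$$|M_k(z_n)| \geq c_2\, (\im(W_n)/a_n)^{k/2}.$$
To turn this into a lower bound on $|M_k(z_n)-r_0|$, the main term must dominate $|r_0|$, which requires $\im(W_n)/a_n$ to be sufficiently large; this is what forces $D_0$ to be chosen in terms of $r$, $|r_0|$ and the size of $M_k$ on the cusp. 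Once $D_0$ is fixed, a calibration argument locates $z_n$ near the inner edge of the annulus with $a_n$ comparable to $|x-p_n/q_n|$, producing the required lower bound, which translates into the claimed estimate on $|C_{k,s}(a_n,b_n)|$ via the same diophantine identity.

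The main obstacle is the calibration of $D_0$ in part~(ii): it must be a single constant, independent of $n$, yet large enough that for every $n$ the annulus contains a point $z_n$ at which both $\im(W_n) \geq r$ and $(\im(W_n)/a_n)^{k/2}$ dominates $|r_0|$. This is precisely the extra difficulty beyond Jaffard's setting, where $r=1$ and the term $r_0$ was absent, allowing the uniform choice $D=3$. The remaining work is a careful but essentially mechanical translation between the diophantine parameters $(q_n,\kappa_n)$ and the analytic quantities $a$ and $|b-x+ia|$.
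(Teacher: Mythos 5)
Your overall framework --- transporting $b+ia$ towards the cusp via matrices built from the convergents, splitting on whether the transported point $W$ lies above or below the height $r$ of Claim~\ref{claim:sizeMknotcusp}, and absorbing the subtraction of $r_0$ --- is indeed the paper's. But two steps do not close as you describe them. In part (i), the assertion that both cases $\im(W)\gtrless r$ ``translate to the target form'' with exponent $\kappa_n$ is false, and the either/or in the statement does not come from switching to $\gamma_{n-1}$. When $\im(W)\le r$ the estimate reads $|M_k(b+ia)|\le c_1a^{-k}q_n^{k}|b+ia-p_n/q_n|^{k}$, and to exploit it one needs an \emph{upper} bound on $q_n$; the inner radius of (\ref{condonab}) only yields the lower bound $q_n\ge|b+ia-x|^{-1/\kappa_n}$. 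The upper bound comes from the outer radius together with $q_n\le q_{n-1}^{\kappa_{n-1}-1}$, and that is exactly where $\kappa_{n-1}$ enters; the resulting bound cannot in general be rewritten with $\kappa_n$ (it fails when $\kappa_{n-1}>\kappa_n$). Your alternative route via $\gamma_{n-1}$ and the point $p_{n-1}/q_{n-1}$ also breaks down, because $|b+ia-p_{n-1}/q_{n-1}|$ is not comparable to $|b+ia-x|$ on the annulus: $b+ia$ may be far closer to $p_{n-1}/q_{n-1}$ than to $x$. A smaller issue: your $\gamma_n$ has determinant $(-1)^{n-1}$, so for half the $n$ it maps $\uhp$ to the lower half-plane and the transformation law does not apply as written; the paper inserts signs to stay in $SL_2(\Z)$.

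In part (ii) you correctly isolate the difficulty --- producing, for a single $D_0$ and infinitely many $n$, a point of the annulus where $|M_k(W)|$ is bounded below --- but you do not resolve it, and the calibration you gesture at goes the wrong way. With $b=x$ one gets $\im(W)=\frac{D}{D^2+1}q_n^{\kappa_n-2}<\frac12 q_n^{\kappa_n-2}$ for every $D>1$; so if $x$ is badly approximable (hence $q_n^{\kappa_n-2}$ bounded) and $r$ is large, no choice of a \emph{large} $D_0$ can force $\im(W)\ge r$ --- if anything $D_0$ must be taken close to $1$. (Also, the domination of $|r_0|$ that you single out as the obstacle is automatic for large $n$, since $(\im(W)/a)^{k/2}\asymp q_n^{k(\kappa_n-1)}\to\infty$.) The paper's resolution is a genuinely different second idea that your proposal lacks: it splits on whether $(q_n^{\kappa_n-2})_n$ is unbounded. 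If it is, one reaches $\im(W)\ge r$ along a subsequence and uses the constant $c_2$ from Claim~\ref{claim:sizeMknotcusp}. If it is bounded, Claim~\ref{claim:sizeMknotcusp} gives nothing; instead one extracts subsequences along which $q_n^{\kappa_n-2}\to L_0$ and $(-1)^nq_{n-1}/q_n\to L_1$, so that the transported points converge to $L_1-\frac{1-iD_0}{D_0^2+1}L_0\in\uhp$, and one chooses $D_0$ so that this limit is not a zero of the holomorphic, not identically vanishing function $M_k$; the lower bound $|M_k(W)|>\delta$ then comes from continuity near that limit point, not from the behaviour at the cusp. Without this (or a fully verified alternative placement of the test points), part (ii) of your argument is incomplete.
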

\begin{proof}
For a matrix $\gamma=\bigl(\begin{smallmatrix}
a&b\\ c&d
\end{smallmatrix} \bigr) \in SL_2(\Z)$, and $z\in\C$ we will denote the fraction transformation as
$$\gamma\cdot z= \frac{az+b}{cz+d},$$
if $cz+d\in\C\setminus\{0\}$, and 
$ \gamma\cdot \left(-\frac{d}{c}\right)= \infty.$

Consider 
$$\gamma_n= \left(\begin{matrix}
(-1)^nq_{n-1}&(-1)^{n-1}p_{n-1}\\ q_n&-p_n
\end{matrix} \right).$$
We have
$(-1)^nq_{n-1}(-p_n)-(-1)^{n-1}p_{n-1}q_n=(-1)^{n-1}(q_{n-1}p_n-p_{n-1}q_n)=1$, which shows that $\gamma_n\in SL_2(\Z)$. 
Let $z\in\C$ with $\im(z)>0$. We have
\begin{align*}
 \gamma_n\cdot\left(\frac{p_n}{q_n}+z\right)&=\frac{(-1)^nq_{n-1}\big(\frac{p_n}{q_n}+z\big)+(-1)^{n-1}p_{n-1}}{q_n\big(\frac{p_n}{q_n}+z\big)-p_n}\\
&= \frac{(-1)^n(p_nq_{n-1}-p_{n-1}q_n)+z(-1)^nq_{n-1}q_n}{q_n^2 z}\\
&=\frac{-1+(-1)^nzq_{n-1}q_n}{q_n^2 z}\\
&=\frac{(-1)^nq_{n-1}}{q_n}-\frac{1}{q_n^2 z}.
\end{align*}

We recall that for any $\gamma=\bigl(\begin{smallmatrix}
a&b\\ c&d
\end{smallmatrix} \bigr) \in SL_2(\Z)$, we have
\begin{equation}\label{eqationmod}
 M_k(z)=\frac{M_k(\gamma\cdot z)}{(cz+d)^k}.
\end{equation}
Then we have
\begin{equation}\label{mod} 
\left|M_k\left(\frac{p_n}{q_n}+z\right)\right|=\frac{\left|M_k\left(\gamma_n\cdot\left(\frac{p_n}{q_n}+z\right)\right)\right|}{\left|q_n\left(\frac{p_n}{q_n}+z\right)-p_n\right|^k}
= \frac{\left|M_k\left(\frac{(-1)^nq_{n-1}}{q_n}-\frac{1}{q_n^2 z}\right)\right|}{|q_n z|^k}.
\end{equation}
Then we observe that $$\im\left(\frac{(-1)^nq_{n-1}}{q_n}-\frac{1}{q_n^2 z}\right)=\frac{\im(z)}{q_n^2 |z|^2}.$$ We now consider two cases.

\medskip
\noindent\textbf{\textit{Case 1:}} Assume that $\frac{\im(z)}{q_n^2 |z|^2} \leq r$. By Claim \ref{claim:sizeMknotcusp} we have
$$\left|M_k\left(\frac{p_n}{q_n}+z\right)\right|\leq c_1\frac{|q_n^2 z^2|^{k}}{|q_n z|^k\im(z)^{k}}
= c_1\frac{q_n^{k} |z|^{k}}{\im(z)^{k}}.$$
For $z=b+ia-\frac{p_n}{q_n}$, we have
\begin{equation*}
  \left|M_k\left(b+ia\right)\right| \leq c_1a^{-k}q_n^{k} \left|b+ia-\frac{p_n}{q_n} \right|^{k}.
\end{equation*}
By (\ref{condonab}), we have
\begin{equation}\label{eq:zitoab}
  \frac{D-1}{D}|b+ia-x|
\leq \left|b+ia-\frac{p_n}{q_n} \right|
\leq \frac{D+1}{D}|b+ia-x|.
\end{equation}
Also by (\ref{condonab}), since
$$\frac{D}{q_n^{\kappa_n}}\leq|b+ia-x|\leq\frac{D}{q_{n-1}^{\kappa_{n-1}}}$$
and $$\frac{1}{q_{n-1}^{\kappa_{n-1}-1}}\leq\frac{1}{q_n},$$
noting that $D>1$, we have
\begin{equation}\label{eq:qnitoab}
 |b+ia-x|^{-1/\kappa_n}\leq q_n \leq D|b+ia-x|^{(1-\kappa_{n-1})/\kappa_{n-1}}.
\end{equation}
Substituting it, we get
\begin{align}
  \left|M_k\left(b+ia\right)\right| 
&\leq c_1(D+1)^{k}a^{-k}|b+ia-x|^{k/\kappa_{n-1}} \notag\\
&\leq c_1(D+1)^{k}a^{-k+k/\kappa_{n-1}}\left(1+\frac{|b-x|}{a}\right)^{k/\kappa_{n-1}} .\label{Ekeq1}
\end{align}
Since $-k+k/\kappa_{n-1}<0$ and $a\in(0,1)$, we have $a^{-k+k/\kappa_{n-1}}>1$. Also, as $k/\kappa_{n-1}>0$, we have $\left(1+\frac{|x-b|}{a}\right)^{k/\kappa_{n-1}}>1$. Then
\begin{align*}
|C_{k,s}(a,b)|&=|a^sc_k(s)(1-M_k(b+ia))|\leq a^s|c_k(s)|(1+|M_k(b+ia)|)\\
&\leq a^s|c_k(s)|\left(1+c_1(D+1)^{k}a^{-k+k/\kappa_{n-1}}\left(1+\frac{|b-x|}{a}\right)^{k/\kappa_{n-1}}\right)\\
&\leq a^s|c_k(s)|\left(1+c_1(D+1)^{k}\right)a^{-k+k/\kappa_{n-1}}\left(1+\frac{|b-x|}{a}\right)^{k/\kappa_{n-1}}.
\end{align*}
The result follows with $C=|c_k(s)|\left(1+c_1(D+1)^{k}\right)$.

\medskip
\noindent\textbf{\textit{Case 2:}} Assume that $\frac{\im(z)}{q_n^2 |z|^2} > r$. By Claim \ref{claim:sizeMknotcusp} we have 
$$\left|M_k\left(\frac{p_n}{q_n}+z\right)\right|\leq \frac{c_3}{|q_n z|^k}.$$
By (\ref{eq:qnitoab}) and (\ref{eq:zitoab}), we get
\begin{align}
\left|M_k\left(b+ia\right)\right|
&\leq c_3\left(\frac{D}{D-1}\right)^{k}|b+ia-x|^{-k+k/\kappa_n}\leq c_3\left(\frac{D}{D-1}\right)^{k}a^{-k+k/\kappa_n}\notag\\
&\leq c_3\left(\frac{D}{D-1}\right)^{k}a^{-k+k/\kappa_n}\left(1+\frac{|x-b|}{a}\right)^{k/\kappa_n}.\label{Ekeq2}
\end{align}
As before, since $a^{-k+k/\kappa_n}\left(1+\frac{|x-b|}{a}\right)^{k/\kappa_n}>1$, we have 
\begin{align*}
|C_{k,s}(a,b)|&=|a^sc_k(s)(1-M_k(b+ia))|\leq a^s|c_k(s)|(1+|M_k(b+ia)|)\\
&\leq a^s|c_k(s)|\left(1+c_3\left(\frac{D}{D-1}\right)^{k}a^{-k+k/\kappa_n}\left(1+\frac{|x-b|}{a}\right)^{k/\kappa_n}\right)\\
&\leq a^s|c_k(s)|\left(1+c_3\left(\frac{D}{D-1}\right)^{k}\right)a^{-k+k/\kappa_n}\left(1+\frac{|x-b|}{a}\right)^{k/\kappa_n}.
\end{align*}
The result follows with $C= |c_k(s)|\left(1+c_3\left(\frac{D}{D-1}\right)^{k}\right)$.

\medskip
For the second part of Proposition~\ref{propkk}, first suppose that $(q_n^{\kappa_n-2})_n$ is unbounded. Then for any $D>1$ there exists an increasing sequence $(n_m)_m$, such that for all $m$ we have 
\begin{equation}\label{opt2}
q_{n_m}^{\kappa_{n_m}-2}>\frac{(D^2+1)}{D}r,
\end{equation}
where $r$ is the constant defined in Claim \ref{claim:sizeMknotcusp}, and $n_m$ is large enough so that $q_{n_m}^{k\kappa_{n_m}-k}>4(\sqrt{D^2+1})^k$. 
Now consider the point $a=\frac{D}{q_{n_m}^{\kappa_{n_m}}}, b=x$, which satisfies (\ref{condonab}). Then we see that with 
$z=b+ia-\frac{p_{n_m}}{q_{n_m}}$, 
$$|z|= \frac{\sqrt{D^2+1}}{q_{n_m}^{\kappa_{n_m}}},$$
and it follows that  
$$\frac{\im(z)}{q_n^2 |z|^2}=\frac{D}{D^2+1} q_n^{\kappa_n-2}.$$
Then by Claim \ref{claim:sizeMknotcusp}, we have
$$\left|M_k\left(\frac{(-1)^{n_m}q_{n_m-1}}{q_{n_m}}-\frac{1}{q_{n_m}^2 z}\right)\right|\geq c_2.$$
By (\ref{mod}) we have
\begin{equation}\label{Ekeq3}
 \left|M_k\left(\frac{p_{n_m}}{q_{n_m}}+z\right)\right|
\geq  \frac{c_2}{|q_{n_m} z|^k}=\frac{c_2q_{n_m}^{-k+k\kappa_{n_m}}}{(\sqrt{D^2+1})^k},
\end{equation}
and hence by (\ref{opt2}), we have
\begin{align*}
 |C_{k,s}(a,b)|&\geq c_k(s)\frac{c_2}{2(\sqrt{D^2+1})^k} q_n^{-s\kappa_n+k\kappa_n-k}
\geq \widetilde{C}\left(\frac{D}{q_{n_m}^{\kappa_{n_m}}}\right)^{s-k+k/\kappa_{n_m}}\\
&= \widetilde{C}a^{s-k+k/\kappa_{n_m}}\left(1+\frac{|b-x|}{a}\right)^{k/\kappa_{n_m}},
\end{align*}
with $\widetilde{C}=\frac{c_2 c_k(s)}{2(\sqrt{D^2+1})^k D^{s-k+k/2}}$, and $D_0=D$. 

\medskip
Now consider the second case, namely suppose that $(q_{n}^{\kappa_{n}-2})_n$ is bounded. We will describe how we choose $D_0$. As $(q_{n}^{\kappa_{n}-2})_n$ is bounded, it has a converging subsequence, and the limit $L_0$ is greater than or equal to 1, because $q_n\geq 1$ and $\kappa_n\geq 2$, for all $n$. Then 
\begin{equation}\label{condM0}
 q_{n_\ell}^{\kappa_{n_\ell}-2} \to L_0\geq 1, \text{ as } \ell \to \infty.
\end{equation}
We also observe that $\left(\frac{(-1)^{n_\ell}q_{n_\ell-1}}{q_{n_l}}\right)_\ell$ is bounded, and has a converging subsequence. Suppose
\begin{equation}\label{condM1}
\frac{(-1)^{n_{\ell(m)}}q_{n_{\ell(m)}-1}}{q_{n_{\ell(m)}}}\to L_1.
\end{equation}
Finally, since $(-1)^{n_{\ell(m)}}=1$ for infinitely many $m$ or $(-1)^{n_{\ell(m)}}=-1$ for infinitely many $m$, we may extract a constant subsequence of $(-1)^{n_{\ell(m)}}$. We will thus assume that all the elements are equal to 1, the same arguments apply to the other case. For simplicity we will denote this subsequence $(n_m)_m$.

Since $M_k$ is a holomorphic function in $\uhp$, we can choose $D_0>1$ and $\delta>0$ such that
\begin{equation}\label{choosingd0}
M_k\left(L_1-\frac{1-iD_0}{D_0^2+1}L_0\right)\neq 0,
\end{equation}
and 
\begin{equation}\label{Eknonzero}
 \left|M_k\left(L_1-\frac{1-iD_0}{D_0^2+1}L_0 +\varepsilon\right)\right|>\delta,
\end{equation}
for $\varepsilon$ small enough. Let $\varepsilon_0>0$ such that for all $\varepsilon\leq \varepsilon_0$, (\ref{Eknonzero}) is satisfied.
For each $m$ consider the point
$$a=\frac{D_0}{q_{n_m}^{\kappa_{n_m}}}; b=x,$$
which satisfies (\ref{condonab}). Using the previous notation $z=b+ia-\frac{p_{n_m}}{q_{n_m}}$, we have
\begin{align*}
 \frac{(-1)^{n_m}q_{n_m-1}}{q_{n_m}}-\frac{1}{q_{n_m}^2 z}&=\frac{q_{n_m-1}}{q_{n_m}}-\frac{1}{q_{n_m}^2 \left( \frac{D_0}{q_{n_m}^{\kappa_{n_m}}}i+x-\frac{p_{n_m}}{q_{n_m}}\right)}\\
&=\frac{q_{n_m-1}}{q_{n_m}}-\frac{1}{q_{n_m}^2 \left( \frac{D_0}{q_{n_m}^{\kappa_{n_m}}}i+\frac{(-1)^{n_m}}{q_{n_m}^{\kappa_{n_m}}}\right)}
=\frac{q_{n_m-1}}{q_{n_m}}-\frac{q_{n_m}^{\kappa_{n_m}-2}}{iD_0+1}\\
&=\frac{q_{n_m-1}}{q_{n_m}}-\frac{(1-iD_0)q_{n_m}^{\kappa_{n_m}-2}}{D_0^2+1}\to L_1-\frac{1-iD_0}{D_0^2+1}L_0,
\end{align*}
as $m\to\infty$, by (\ref{condM0}) and (\ref{condM1}). 
Therefore, there exists $L \in\N$ such that for all $m\geq L$ we have
$$\left|\frac{q_{n_m-1}}{q_{n_m}}-\frac{(1-iD_0)q_{n_m}^{\kappa_{n_m}-2}}{D_0^2+1}-L_1+\frac{1-iD_0}{D_0^2+1}L_0\right|<\varepsilon_0,$$
and  $q_{n_m}^{k\kappa_{n_m}-k}>\frac{2\big(\sqrt{D_0^2+1}\big)^k}{\delta}$.
By (\ref{mod}) and (\ref{Eknonzero}) we have
\begin{equation*}
 \left|M_k\left(\frac{p_{n_m}}{q_{n_m}}+z\right)\right|
\geq  \frac{\delta}{|q_{n_m} z|^k}=\frac{\delta q_{n_m}^{-k+k\kappa_{n_m}}}{(\sqrt{D_0^2+1})^k}.
\end{equation*}
Then we have
\begin{align*}
 |C_{k,s}(a,b)|&\geq \frac{c_k(s)\delta}{2(\sqrt{D_0^2+1})^k} q_n^{-s\kappa_n+k\kappa_n-k}\geq \widetilde{C}\left(\frac{D_0}{q_{n_m}^{\kappa_{n_m}}}\right)^{s-k+k/\kappa_{n_m}}\\
&= \widetilde{C}a^{s-k+k/\kappa_{n_m}}\left(1+\frac{|b-x|}{a}\right)^{k/\kappa_{n_m}},
\end{align*}
with $\widetilde{C}=\frac{c_k(s)\delta}{2(\sqrt{D_0^2+1})^k D_0^{s-k+k/2}}$. This completes the proof of the proposition with $D_0$ satisfying~(\ref{choosingd0}).
\end{proof}

\section{Proof of Theorem \ref{thm:MknotcuspHE}}\label{spth1}

\noindent\textit{Proof of Theorem \ref{thm:MknotcuspHE}}.
Let $x\in\R\setminus\Q$, and assume that $s>k+\frac{k}{\nu(x)}-\frac{k}{\mu(x)}$. Let $\delta_0$ as in Lemma~\ref{lem1}. Assume that $\mu(x)<\infty$, a very similar arguments apply to the other case, and therefore we omit the details. There exists $\delta_1>0$ such that for all $0<\delta<\delta_1$ we have
\begin{equation}\label{eqpsnmd}
 s>k+\frac{k}{\nu(x)-\delta}-\frac{k}{\mu(x)+\delta}.
\end{equation}
Let $0<\delta<\min(\delta_0,\delta_1)$ be given. Then, there exists $N\in\N$ such that for all $n\geq N$ we have
\begin{equation}\label{eqpnkm}
 \nu(x)-\delta\leq \kappa_n \leq \mu(x)+\delta.
\end{equation}
Let $D>1$ and let $\omega=b+ia \in \uhp$ be such that
\begin{equation*}
 |\omega-x|\leq D\left|x-\frac{p_{N}}{q_{N}}\right|.
\end{equation*}
Then we observe that (\ref{condonab}) define half-rings around $x$ (see Figure \ref{fdomain}), and there exists $n_\omega> N$ such that 
\begin{equation*}
  D\left|x-\frac{p_{n_\omega}}{q_{n_\omega}}\right|\leq |\omega-x|\leq D\left|x-\frac{p_{n_\omega-1}}{q_{n_\omega-1}}\right|.
\end{equation*}

By Proposition~\ref{propkk} we have
\begin{equation*}
 |C_{k,s}(a,b)|\leq C a^{s-k+k/\kappa_{n_\omega}}\left(1+\frac{|b-x|}{a}\right)^{k/\kappa_{n_\omega}}
\end{equation*}
or 
\begin{equation*}
 |C_{k,s}(a,b)|\leq C a^{s-k+k/\kappa_{n_\omega-1}}\left(1+\frac{|b-x|}{a}\right)^{k/\kappa_{n_\omega-1}}.
\end{equation*}
It follows from (\ref{eqpnkm}) that
\begin{equation*}
 |C_{k,s}(a,b)|\leq C a^{s-k+k/(\mu(x)+\delta)}\left(1+\frac{|b-x|}{a}\right)^{k/(\nu(x)-\delta)}.
\end{equation*}
Then we conclude by (\ref{eqpsnmd}) and Proposition~J that $M_{k,s} \in C^{s-k+k/(\mu(x)+\delta)}$ at $x$. Letting $\delta \to 0$ shows that $\alpha_{k,s}(x)\geq s-k+\frac{k}{\mu(x)}$. 

\begingroup
\setlength{\medmuskip}{0mu}
\setlength{\thinmuskip}{0mu}
\setlength{\thickmuskip}{0mu}

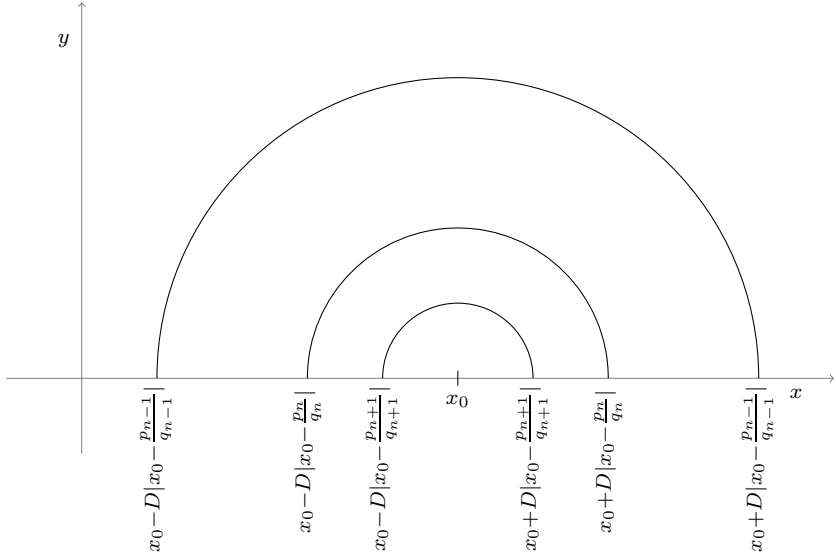
\begin{figure}[h]
\begin{center}
\begin{tikzpicture}
\tikzstyle{every node}=[font=\tiny]

\draw [help lines,->] (-1, 0) -- (10,0);
\draw [help lines,->] (0, -1) -- (0, 5);

\draw (1,0) arc (180:0:4);
\draw (3,0) arc (180:0:2);
\draw (4,0) arc (180:0:1);

\draw (5,-0.1) --(5,0.1);

\node at (9.5,-0.2){$x$};
\node at (-0.24,4.5) {$y$};
\node at (5,-0.3) {$x_0$};
\node [rotate=90] at (4,-1.2) {$x_0-D|x_0-\frac{p_{n+1}}{q_{n+1}}|$};
\node [rotate=90] at (6,-1.2) {$x_0+D|x_0-\frac{p_{n+1}}{q_{n+1}}|$};
\node [rotate=90] at (3,-1.1) {$x_0-D|x_0-\frac{p_{n}}{q_{n}}|$};
\node [rotate=90] at (7,-1.1) {$x_0+D|x_0-\frac{p_{n}}{q_{n}}|$};
\node [rotate=90] at (1,-1.2) {$x_0-D|x_0-\frac{p_{n-1}}{q_{n-1}}|$};
\node [rotate=90] at (9,-1.2) {$x_0+D|x_0-\frac{p_{n-1}}{q_{n-1}}|$};

\end{tikzpicture}
\end{center}
\caption{Half-rings around $x_0$} \label{fdomain}
\end{figure}

\endgroup

\medskip
For the optimality of this exponent, we see that Proposition~\ref{propkk} (ii) implies that for each $\delta>0$ there exists a point $b+ia$, arbitrarily close to $x$ such that
 $$|C_{k,s}(a,b)|\geq \tilde{C} a^{s-k+k/(\mu(x)-\delta)}\left(1+\frac{|b-x|}{a}\right)^{k/(\nu(x)+\delta)}.$$ 
By Proposition~J, we conclude that $M_{k,s}$ is not $C^{s-k+k/(\mu(x)-\delta)}$ at $x$. 
Letting $\delta\to0$ shows that
$$\alpha_{k,s}(x)= s-k+\frac{k}{\mu(x)}.$$
This completes the proof of the theorem. \hfill\qedsymbol

\section{Proof of Theorem \ref{mainthm3}}\label{spth2}

We prove Theorem~\ref{mainthm3} in the same way as Theorem~\ref{thm:MknotcuspHE}. We have the analogues of Claim~\ref{claim:sizeMknotcusp} and Proposition~\ref{propkk}:

\begin{claim}\label{claim3}
Let $M_k$ be a cusp form. There exists $c_1 >0$, such that for all $z\in\uhp$ we have:
 $$|M_k(z)|\leq\frac{c_1}{\im(z)^{k/2+1}}.$$
\end{claim}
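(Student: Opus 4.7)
The idea is to split $\uhp$ into two regions depending on $\im(z)$ and use a different estimate in each. For $\im(z) \leq 1$, I will invoke the classical bound $|M_k(z)|\,\im(z)^{k/2} \leq C$ valid on all of $\uhp$: this is a standard consequence of the $SL_2(\Z)$-invariance of the function $z \mapsto |M_k(z)|\,\im(z)^{k/2}$, its continuity on the standard fundamental domain, and the fact that a cusp form decays exponentially at the cusp (so this product vanishes at infinity). Once this bound is in hand, for $\im(z) \leq 1$ we have $\im(z)^{-1} \geq 1$, so
$$|M_k(z)| \leq C\,\im(z)^{-k/2} \leq C\,\im(z)^{-k/2-1}.$$

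For $\im(z) \geq 1$, I would use the Fourier expansion $M_k(z) = \sum_{n \geq 1} r_n e^{2\pi i n z}$ (no constant term, since $M_k$ is cuspidal). Setting $y = \im(z)$ and using any polynomial bound on $|r_n|$ (for example Deligne's bound cited in the proof of Lemma~\ref{lem:Mksconv}, or even the cruder Hecke-type estimate $r_n = O(n^{k-1})$), I estimate
\begin{equation*}
|M_k(z)| \leq \sum_{n \geq 1} |r_n| e^{-2\pi n y} \leq e^{-2\pi y} \sum_{n \geq 1} |r_n| e^{-2\pi(n-1)y} \leq e^{-2\pi y} \sum_{n \geq 1} |r_n| e^{-2\pi(n-1)},
\end{equation*}
where the last inequality uses $y \geq 1$ termwise. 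The remaining series converges, giving $|M_k(z)| \leq K e^{-2\pi y}$ on this region. Since $e^{-2\pi y} y^{k/2+1}$ is continuous on $[1,\infty)$ and tends to $0$ at infinity, it is bounded there, hence $|M_k(z)| \leq K' y^{-k/2-1}$ for $y \geq 1$. Taking $c_1$ to be the maximum of the constants produced in the two regimes delivers the claim uniformly on $\uhp$.

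The one conceptual subtlety worth noting is that the target bound $\im(z)^{-k/2-1}$ is \emph{weaker} than the classical cusp-form bound $\im(z)^{-k/2}$ when $\im(z) \leq 1$ but \emph{stronger} when $\im(z) \geq 1$. Consequently no single estimate can suffice, and one must combine the polynomial growth bound coming from modular invariance near the real line with the exponential decay coming from the $q$-expansion at the cusp. Beyond this observation, no serious technical obstacle arises; the proof is a short assembly of two classical facts about cusp forms.
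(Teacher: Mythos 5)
Your proof is correct. Note that the paper itself omits the proof of this claim, saying only that it is analogous to the non-cusp case, so the intended argument is the one modelled on Claim~\ref{claim:sizeMknotcusp}: it runs entirely through Fourier coefficients. Using Hecke's bound for cusp forms $r_n=O(n^{k/2})$ (or Deligne's bound, as cited in Lemma~\ref{lem:Mksconv}), one estimates $|M_k(z)|\leq\sum_{n\geq1}|r_n|e^{-2\pi n\im(z)}\leq c\sum_{n\geq1}n^{k/2}e^{-2\pi n\im(z)}$, and since $k$ is even, $k/2$ is an integer, so the very same manipulation as in Claim~\ref{claim:sizeMknotcusp} writes this sum as $P_{k/2}(e^{-2\pi\im(z)})/(1-e^{-2\pi\im(z)})^{k/2+1}$ with $P_{k/2}(0)=0$, which is bounded by $c_1\im(z)^{-k/2-1}$ uniformly on $\uhp$ in a single stroke. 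You instead split at $\im(z)=1$, using the $SL_2(\Z)$-invariance of $|M_k(z)|\im(z)^{k/2}$ together with its vanishing at the cusp for the lower region, and termwise exponential decay of the $q$-expansion for the upper region; both ingredients are classical and your assembly is sound. The two routes are close cousins rather than truly disjoint, since the sup-norm bound you invoke is precisely how the $O(n^{k/2})$ coefficient bound is usually derived; your version has the small advantage of not needing any coefficient estimate beyond polynomial growth, while the coefficient-sum version matches the template already set up in Section~\ref{sestCknoncusp}. One small caveat: your closing remark that ``no single estimate can suffice'' is overstated --- the single coefficient-sum estimate above does give the bound on all of $\uhp$ at once; what is true is that neither of the two facts you quote suffices by itself.
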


\begin{prop}\label{modformprop}
 Let $k\geq 4$, be even. Let $M_k$ be a cusp form of weight $k$, and 
 let $s>\frac{k}{2}+1$.  Let $x\in\R\setminus\Q$. Let $a\in(0,1), b\in \R$. 
\begin{enumerate}[(i)]
 \item Let $D>1$. For each $n$, if
\begin{equation}\label{condonab4}
 D\left|x-\frac{p_n}{q_n}\right| \leq |b-x+ia| \leq D\left|x-\frac{p_{n-1}}{q_{n-1}}\right|,
\end{equation}
we have:
$$ |C_{k,s}(a,b)|\leq C a^{s-k/2-1+2/\kappa_{n-1}}\left(1+\frac{|b-x|}{a}\right)^{2/\kappa_{n-1}},$$
for a constant $C$ that may depend on $k$, $s$ and $x$.
 \item Moreover, let us assume that there exists $N\in \N$ such that for infinitely many $n$,
\begin{equation}\label{an152}
 a_{n}=N.
\end{equation}
Then, there exists $\widetilde{C}>0$ that may depend on $k$, $s$ and $x$, such that for an increasing subsequence of $n$,
there exists a point $b+ia$ in the domain (\ref{condonab4}) with
$$|C_{k,s}(a,b)|\geq \widetilde{C} a^{s-k+k/\kappa_{n-1}}\left(1+\frac{|b-x|}{a}\right)^{k/\kappa_{n-1}}.$$ 
\end{enumerate}
\end{prop}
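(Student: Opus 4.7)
The argument for part (i) is a direct analogue of Case~1 in the proof of Proposition~\ref{propkk}, with Claim~\ref{claim3} replacing Claim~\ref{claim:sizeMknotcusp}. From the modular identity (\ref{mod}) with $z = b + ia - p_n/q_n$ and $\im(\gamma_n\cdot(p_n/q_n+z)) = a/(q_n^2|z|^2)$, Claim~\ref{claim3} yields $|M_k(b+ia)| \leq c_1 q_n^2|z|^2/a^{k/2+1}$. Bounding $q_n$ and $|z|$ in terms of $|b+ia-x|$ via (\ref{eq:zitoab}) and (\ref{eq:qnitoab}) gives $q_n^2|z|^2 \leq (D+1)^2|b+ia-x|^{2/\kappa_{n-1}}$, and then using $|b+ia-x| \leq a(1+|b-x|/a)$ together with Lemma~\ref{lem:wavtmk} produces the claimed upper bound. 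This step is routine.

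Part (ii) follows the scheme of the second case in the proof of Proposition~\ref{propkk}(ii), but since a cusp form has no lower bound at infinity, I must exhibit an explicit point of $\uhp$ where $|M_k|$ stays away from zero. The hypothesis that $a_n = N$ for infinitely many $n$ traps $q_{n-1}^{\kappa_{n-1}-2}$ in a bounded interval: the standard estimate $1/(q_{n-1}(q_n+q_{n-1})) \leq 1/q_{n-1}^{\kappa_{n-1}} \leq 1/(q_{n-1}q_n)$ combined with $q_n \leq (N+1)q_{n-1}$ gives $1 \leq q_{n-1}^{\kappa_{n-1}-2} \leq N+2$ along this subsequence. I then extract nested subsequences so that $q_{n-1}^{\kappa_{n-1}-2} \to L_0 \in [1,N+2]$, $(-1)^{n-1}q_{n-2}/q_{n-1} \to L_1 \in [-1,1]$, and the sign $(-1)^{n-1}$ stabilizes (say to $+1$; the other case is symmetric).

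With $(a,b) = (D_0/q_{n-1}^{\kappa_{n-1}}, x)$ for a $D_0 > 1$ to be chosen, writing $z = (1+iD_0)/q_{n-1}^{\kappa_{n-1}}$ and repeating the computation from the proof of Proposition~\ref{propkk}(ii) gives
$$\gamma_{n-1}\cdot\left(\frac{p_{n-1}}{q_{n-1}} + z\right) = \frac{q_{n-2}}{q_{n-1}} - \frac{(1-iD_0)\, q_{n-1}^{\kappa_{n-1}-2}}{D_0^2+1} \longrightarrow w_0 := L_1 - \frac{1-iD_0}{D_0^2+1}L_0 \in \uhp.$$
Since $L_0 \geq 1$, the map $D_0 \mapsto w_0$ is a non-constant real-analytic arc in $\uhp$; because $M_k$ is a nonzero holomorphic function, its zero set meets this arc discretely, so some $D_0 > 1$ gives $M_k(w_0) \neq 0$, and $|M_k| \geq \delta > 0$ on a neighborhood of $w_0$. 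For all large $n$ in the subsequence, (\ref{mod}) then yields $|M_k(b+ia)| \geq \delta/|q_{n-1}z|^k = \delta\, q_{n-1}^{k(\kappa_{n-1}-1)}/(D_0^2+1)^{k/2}$, and multiplying by $|\widehat{C}|a^s = |\widehat{C}|D_0^s q_{n-1}^{-s\kappa_{n-1}}$ (with $|b-x|=0$) produces the asserted bound with exponent $s-k+k/\kappa_{n-1}$. That $(a,b)$ lies in (\ref{condonab4}) with $D = D_0$ is immediate: $|b-x+ia| = a = D_0|x - p_{n-1}/q_{n-1}|$, and the lower annulus bound follows from $q_n^{\kappa_n} \geq q_{n-1}^{\kappa_{n-1}}$.

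The main obstacle is the selection of $D_0$ so that $M_k(w_0) \neq 0$; this is where the cusp-form case genuinely diverges from the non-cusp case, since one cannot fall back on a lower bound at infinity. The rest---subsequence extraction, identification of $\gamma_{n-1}\cdot(p_{n-1}/q_{n-1}+z)$, and the verification that $(a,b)$ sits in the prescribed annulus---parallels the bookkeeping already carried out in Proposition~\ref{propkk}.
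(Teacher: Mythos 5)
Your proof is correct and takes essentially the same approach the paper intends: the paper omits the proof of Proposition~\ref{modformprop}, saying only that it parallels Proposition~\ref{propkk} with Claim~\ref{claim3} replacing Claim~\ref{claim:sizeMknotcusp}, and your part (i) is exactly Case~1 of that argument while your part (ii) reproduces the bounded-subsequence case of Proposition~\ref{propkk}(ii), with the hypothesis that $a_n=N$ for infinitely many $n$ supplying the boundedness $1\le q_{n-1}^{\kappa_{n-1}-2}\le N+2$ that the cusp form's decay at infinity would otherwise make unavailable. The exponent bookkeeping ($s-k/2-1+2/\kappa_{n-1}$ in (i), $s-k+k/\kappa_{n-1}$ in (ii)) and the choice of $D_0$ via discreteness of the zero set of $M_k$ along the arc $D_0\mapsto L_1-\frac{1-iD_0}{D_0^2+1}L_0$ are all sound.
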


Their proofs and the proof of Theorem~\ref{mainthm3} are very similar to the proofs in the case on non-cusp form, and therefore ommitted.

\section{Applications}

Let $4\leq k\in\N$ be even. The Eisenstein series of weight $k$ over $\uhp$ is defined as $ E_k(z)=\frac{1}{2\zeta(k)}\sum_{\substack{m,n \in \Z\\ (m,n)\neq (0,0)}} \frac{1}{(m+nz)^k}.$
Its Fourier expansion is
\begin{equation}\label{Ektau}
 E_k(z)=1-\frac{2k}{B_k}\sum_{n=1}^{\infty}\sigma_{k-1}(n)e^{2\pi i nz},
\end{equation}
where $B_k$ is the $k$-th Bernoulli number and $\sigma_{k-1}(n)=\sum_{d|n} d^{k-1}.$ It is modular under the action of $SL_2(\Z)$, but it is not a cusp form.
Define $E_{k,s}(x)= \sum_{n=1}^\infty \frac{\sigma_{k-1}(n)}{n^{s}} \sin(2\pi n x).$ We note that if we let $M_k=E_k$, then $E_{k,s}(x)=-\frac{B_k}{2k}M_{k,s}$, therefore we can apply Theorem~\ref{thm:MknotcuspHE} to it.
Also, Equation~(\ref{Ektau}) defines a quasi-modular function of weight $2$ under the action of $SL_2(\Z)$ when $k=2$. Instead of (\ref{eqationmod}), we have that for all $z\in \uhp$:
\begin{equation*}
 E_2(z)=\frac{E_2(\gamma\cdot z)}{(cz+d)^2}-\frac{6}{i\pi}\frac{c}{(cz+d)}.
\end{equation*}
Because of the addition term $-\frac{6}{i\pi}\frac{c}{(cz+d)}$, we need to add an condition for the optimality of the H\"{o}lder exponent. 

\begin{thm}\label{mainthm1}
For $x\in \R\setminus\Q$, let $\gamma_{2,s}(x)$ be the H\"{o}lder regularity exponent of $E_{2,s}$ at $x$. 
Assume that
\begin{equation}\label{sknumu}
 s>2+\frac{2}{\nu(x)}-\frac{2}{\mu(x)}.
\end{equation}
We have 
$$\gamma_{2,s}(x) \geq s-2+\frac{2}{\mu(x)}.$$
Furthermore, if for infinitely many $n$,
\begin{equation}\label{an7}
 a_{n}(x)\geq 7,
\end{equation}
then
$$\gamma_{2,s}(x)=s-2+\frac{2}{\mu(x)}.$$
\end{thm}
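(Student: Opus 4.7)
The plan is to mimic the proof of Theorem~\ref{thm:MknotcuspHE} for $k = 2$, modifying every step where the transformation law $M_k(z) = (cz+d)^{-k} M_k(\gamma \cdot z)$ is used so as to accommodate the extra term $-\tfrac{6}{i\pi} \tfrac{c}{cz+d}$ in the quasi-modular law for $E_2$. Lemma~\ref{lem:wavtmk} applied with $r_0 = 1$ gives $C_{2,s}(a,b) = \widehat{C} a^s (E_2(b+ia) - 1)$, so everything again reduces to estimating $|E_2(b+ia) - 1|$ on the nested half-rings~(\ref{condonab}). Using the matrix $\gamma_n$ of Proposition~\ref{propkk}, with $cz + d = q_n z$ and $c = q_n$ (where $z := b+ia - p_n/q_n$), the quasi-modular law becomes
\begin{equation*}
E_2(b+ia) \;=\; \frac{E_2(w)}{(q_n z)^2} \;-\; \frac{6}{i\pi z}, \qquad w := \gamma_n \cdot (b+ia).
\end{equation*}

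For the lower bound on $\gamma_{2,s}(x)$, I would first record the analogue of Claim~\ref{claim:sizeMknotcusp} for $E_2$: the Fourier expansion~(\ref{Ektau}) with constant term $1$ furnishes constants $r, c_2, c_3 > 0$ with $c_2 \leq |E_2(w)| \leq c_3$ on $\im(w) \geq r$, while $\sigma_1(n) = O(n^{1+\varepsilon})$ gives $|E_2(w)| \leq c_1/\im(w)^{2+\varepsilon}$ on $\im(w) \leq r$ (any fixed small $\varepsilon$ suffices, since an arbitrarily small $\varepsilon$ does not affect the final exponent). Substituting into the display above, the contribution of the first term to $|C_{2,s}(a,b)|$ is controlled exactly as in Proposition~\ref{propkk}(i) with $k = 2$, yielding $Ca^{s-2+2/\kappa_{n-1}}(1+|b-x|/a)^{2/\kappa_{n-1}}$ or the analogous bound with $\kappa_n$. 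The extra term contributes $|6a^s/(\pi z)| \leq 6a^{s-1}/\pi$ because $|z| \geq a$; since $\kappa_{n-1} \geq 2$ and $a \in (0,1)$ we have $a^{s-1} \leq a^{s-2+2/\kappa_{n-1}}$, so the correction is absorbed by the main term. The $\delta \to 0$ argument of Section~\ref{spth1}, combined with Proposition~J, then yields $\gamma_{2,s}(x) \geq s - 2 + 2/\mu(x)$ as soon as~(\ref{sknumu}) holds.

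For the optimality under~(\ref{an7}), I would adapt Proposition~\ref{propkk}(ii) by restricting attention to the subsequence of $n$ with $a_n(x) \geq 7$ and choosing $b = x$, $a = D/q_n^{\kappa_n}$ for a suitable $D > 1$. The constraint $q_{n-1}/q_n \leq 1/a_n \leq 1/7$ forces $w = (-1)^n q_{n-1}/q_n - 1/(q_n^2 z)$ to lie inside a small disk around $-1/(\epsilon_n + iD)$, with $\epsilon_n = \pm 1$ the sign of $x - p_n/q_n$. Writing $E_2(b+ia) - 1 = E_2(w)/(q_n z)^2 - 6/(i\pi z) - 1$, I would extract, as in the second case of Proposition~\ref{propkk}(ii), convergent subsequences of $q_{n-1}/q_n$, of $\epsilon_n$, and of $q_n^{\kappa_n - 2}$, and then select $D$ large enough so that the resulting limit expression is non-zero with the correct order of magnitude. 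Such a $D$ exists because the expression depends holomorphically on $D$ and, for $w$ confined to a disk of radius $1/7$, cannot vanish identically in $D$; the threshold $7$ is precisely what keeps this disk inside a region of $\uhp$ where $E_2$ is continuous and the non-cancellation can be made quantitative. Proposition~J applied at $b = x$ then forces $\gamma_{2,s}(x) \leq s - 2 + 2/\mu(x)$, completing the equality.

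The main obstacle is the non-cancellation step in the optimality argument. In the non-cusp Proposition~\ref{propkk}(ii), the bound $|M_k(w)| \geq c_2$ near the cusp translated immediately into a lower bound on the wavelet coefficient; here the non-modular correction $-6/(i\pi z)$ is of the \emph{same} order of magnitude as $E_2(w)/(q_n z)^2$ whenever $\kappa_n \approx 2$, so one must explicitly rule out exact cancellation between the two terms. The arithmetic hypothesis $a_n(x) \geq 7$ is the device that localizes $w$ sufficiently to make this possible for some choice of $D$; once that is done, the rest of the proof is a direct adaptation of the arguments in Sections~\ref{sestCknoncusp} and~\ref{spth2} to the quasi-modular setting.
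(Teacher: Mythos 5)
Your first half (the lower bound $\gamma_{2,s}(x)\geq s-2+\frac{2}{\mu(x)}$) follows exactly the route the paper indicates: with $\gamma_n$ one gets $E_2(b+ia)=\frac{E_2(w)}{(q_nz)^2}-\frac{6}{i\pi z}$, the correction contributes at most $\frac{6}{\pi}a^{s-1}$ to $|C_{2,s}(a,b)|$ because $|z|\geq a$, and since $\kappa_{n-1}\geq 2$ this is absorbed by the main estimate of Proposition~\ref{propkk}(i) with $k=2$; your $\varepsilon$-loss from $\sigma_1(n)=O(n^{1+\varepsilon})$ is harmless (and in fact the bound $|E_2(z)|\leq 1+c\,\im(z)^{-2}$ holds, as the paper notes). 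This part is fine.

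The optimality part, however, has genuine gaps. First, the claim that $q_{n-1}/q_n\leq 1/7$ forces $w=(-1)^nq_{n-1}/q_n-1/(q_n^2z)$ into a small disk around $-1/(\epsilon_n+iD)$ is false: with $b=x$, $a=D/q_n^{\kappa_n}$ one has $1/(q_n^2z)=q_n^{\kappa_n-2}/(\epsilon_n+iD)$, and $q_n^{\kappa_n-2}\asymp a_{n+1}$ is not controlled by $a_n$. Consequently hypothesis~(\ref{an7}), used at the index $n$ itself, does not touch the real danger you correctly identified, namely cancellation between $E_2(w)/(q_nz)^2$ and $6/(i\pi z)$: their ratio is $\asymp q_n^{2-\kappa_n}$, governed by $a_{n+1}$, not by $a_n$; and ``a region of $\uhp$ where $E_2$ is continuous'' is vacuous since $E_2$ is holomorphic on all of $\uhp$. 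The natural use of~(\ref{an7}) is at the \emph{previous} convergent: for $m=n-1$ with $a_n\geq 7$ one has $q_m^{\kappa_m-2}\geq a_n\geq 7$, hence $\im(w)\geq 7D/(D^2+1)$ is large, $|E_2(w)|$ is close to $1$, and the correction is smaller than the main term by the explicit factor roughly $6\sqrt{D^2+1}/(7\pi)<1$ — a quantitative non-cancellation (note that Proposition~\ref{modformprop}(ii) is likewise stated with $\kappa_{n-1}$). Second, there is an exponent-matching gap: restricting the construction to indices with $a_n\geq 7$ gives $|C_{2,s}|\gtrsim a^{s-2+2/\kappa}$ only along that subsequence, and to conclude $\gamma_{2,s}(x)\leq s-2+\frac{2}{\mu(x)}$ you need the corresponding $\kappa$'s to approach $\mu(x)$; when $\mu(x)>2$ the indices with $a_n\geq 7$ need not contain a subsequence realizing $\mu(x)$, so your argument as written only rules out exponents above $s-2+2/\kappa$ for possibly much smaller $\kappa$. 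The repair is a case split: if $\mu(x)>2$, run the construction along a subsequence with $\kappa_n\to\mu(x)$, where $q_n^{\kappa_n-2}\to\infty$ makes the correction automatically negligible (no use of~(\ref{an7}) needed); if $\mu(x)=2$, every subsequence has $\kappa\to 2$ and~(\ref{an7}) at the shifted index supplies the non-cancellation. Your fallback ``holomorphic in $D$, cannot vanish identically'' can be salvaged — in the bounded case the limit expression equals $\frac{\epsilon L_2-\zeta}{L_0^2}\bigl((\epsilon L_2-\zeta)E_2(\zeta)-\frac{6}{i\pi}\bigr)$ with $\zeta=\epsilon L_2-\frac{L_0}{\epsilon+iD}$, and this cannot vanish identically because $E_2$ is not a M\"obius function — but that argument neither uses~(\ref{an7}) nor follows from the disk picture you describe, so as stated the key step is unproved.
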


\begin{rmk}
Condition (\ref{an7}) is satisfied for almost all $x$, as the sequence of partial quotients is unbounded for almost all $x$. Condition~(\ref{an7}) is a technical condition and the appearance of $7$ is not significant. It is likely that this condition could be removed.
\end{rmk}

We prove Theorem~\ref{mainthm1} in a very similar way as Theorem~\ref{thm:MknotcuspHE}, therefore we do not present the details of the proof. We note that $|E_2(z)|$ is bounded below by a positive constant as $\im(z)\to \infty$, and in fact it follows from the properties of $\sigma_1$ that the statement of Claim~\ref{claim:sizeMknotcusp} is valid for $M_k=E_2$. The difference in the proof of the theorem is that we also need to treat the additional term arising from quasimodularity.

\medskip

If $s\in\N$ we could apply the methods from \cite{I} to decide how many times $E_{k,s}$ is differentiable at $x\in\R$. It has been done for $E_{2,3}$ 
in \cite{P}, where it has been shown that $E_{2,3}$ is neither differentiable at any rational point, nor at irrational points such that 
$\sum_{n=0}^\infty \frac{\log(q_{n+1})}{q_n^2}=\infty$, but $E_{2,3}$ is differentiable at all $x\in\R\setminus\Q$ such that 
$\sum_{n=0}^\infty \frac{\log(q_{n+1})}{q_n^2} < \infty$, and $\lim_{n\to\infty}\frac{\log(q_{n+4})}{q_n^2}=0$. In particular, if $\mu(x)<\infty$ these two conditions are satisfied. 
We also conjecture that, for any even $k\geq2$, the function $E_{k,k+1}$ is differentiable at $x\in\R\setminus\Q$ if and only if 
\begin{equation}\label{SBcon}
 \sum_{n=0}^\infty \frac{\log(q_{n+1})}{q_n^k} < \infty.
\end{equation}
If $\mu(x)<\infty$ and $\frac{1}{\nu(x)}-\frac{1}{\mu(x)}< \frac{1}{k}$, then Theorem~\ref{thm:MknotcuspHE} implies that for $k\geq 4$ we have 
$\alpha_{k,k+1}(x)=1+\frac{k}{\mu(x)}>1$, and it proves one direction of the conjecture in this case. On the other hand, if $\mu(x)=\infty$ for some 
$x\in\R\setminus\Q$, then by Theorems \ref{thm:MknotcuspHE} and \ref{mainthm1} we conclude that $\alpha_{k,k+1}(x)=1$, for all $k\geq2$, and 
(\ref{SBcon}) could verify whether $E_{k,k+1}$ is differentiable at $x$. 

\bigskip

Consider the discriminant modular form $\Delta$ of weight 12, which can be written
$$\Delta(z)=(2\pi)^{12}\sum_{n=1}^{\infty} \tau(n)e^{2i\pi nz}=\frac{(2\pi)^{12}}{1728}(E_4(z)^3-E_6(z)^2),$$
where $\tau$ is the Ramanujan function. Since $\Delta$ is a cusp form, for any $s>6$ the series
$$\Delta_s(x)=\sum_{n=1}^{\infty} \frac{\tau(n)}{n^s}\cos(2\pi n x)$$
converges for all $x\in\R$. We apply Theorem~\ref{mainthm3} to it. 
\begin{cor}\label{cor:delta}
 For $x\in \R\setminus\Q$, let $\delta_{s}(x)$ be the H\"{o}lder regularity exponent of $\Delta_{s}$ at $x$. Assume that 
$s>7$. Then for almost all $x$ we have
$$\delta_{s}(x) = s-6.$$
\end{cor}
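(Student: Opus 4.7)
The plan is to apply Theorem~\ref{mainthm3} directly, after noting that the statement remains valid for cosine series (as mentioned in the paragraph following Theorem~\ref{mainthm3}). For $\Delta$ we have weight $k=12$, so $\tfrac{k}{2}+1 = 7$, which matches the standing hypothesis $s>7$ in the corollary.

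First, I would invoke the classical fact that Lebesgue-almost every $x\in\R\setminus\Q$ satisfies $\mu(x)=\nu(x)=2$. Consequently $\tfrac{2}{\nu(x)}-\tfrac{2}{\mu(x)}=0$ for almost every $x$, so condition~(\ref{labelno2}) reduces to $s>7$, which holds by hypothesis. Applying part~(i) of Theorem~\ref{mainthm3} then yields
$$\delta_s(x) \;\geq\; s-\tfrac{k}{2}-1+\tfrac{2}{\mu(x)} \;=\; s-7+1 \;=\; s-6,$$
for almost every $x$.

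For the matching upper bound I would appeal to part~(ii) of Theorem~\ref{mainthm3}, which requires both $\mu(x)=2$ (already established almost everywhere) and the existence of some $N\in\N$ such that $a_n(x)=N$ for infinitely many $n$. The latter is exactly the frequency condition highlighted in the remark after Theorem~\ref{mainthm3}: by the Gauss--Kuzmin theorem, for almost every $x$ and every $i\geq 1$ the asymptotic frequency $\pi_i(x,n)$ converges to $\tfrac{1}{\log 2}\log\!\left(1+\tfrac{1}{i(i+2)}\right)>0$, so in particular each positive integer (e.g.\ $N=1$) appears infinitely often in the continued fraction expansion. Thus part~(ii) applies and gives $\delta_s(x)=s-\tfrac{k}{2}=s-6$ almost everywhere.

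Combining both bounds on a set of full measure (the intersection of the three almost-sure events above is still of full measure) yields $\delta_s(x)=s-6$ for almost every $x$. There is no real obstacle here; the proof is essentially a bookkeeping exercise verifying that the three almost-sure hypotheses of Theorem~\ref{mainthm3} are simultaneously fulfilled, which is standard from the metric theory of continued fractions.
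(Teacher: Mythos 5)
Your proposal is correct and follows the same route as the paper: the corollary is exactly an application of Theorem~\ref{mainthm3} (in its cosine-series form) with $k=12$, using the standard metric facts that $\mu(x)=\nu(x)=2$ and that some partial quotient value recurs infinitely often for almost every $x$, so that condition~(\ref{labelno2}) reduces to $s>7$ and part~(ii) yields $\delta_s(x)=s-6$ on a set of full measure. The only cosmetic remark is that part~(ii) alone already gives the equality, so the separate lower bound from part~(i) is not needed.
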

Zagier in \cite{Z} considered series of the type of $\Delta_s$, in particular he studied $\Delta_{11}$ (which he regards as an extension of a quantum modular form) and mentioned that it is 4 times but not 6 times continuously differentiable on $\R$. By Corollary~\ref{cor:delta}, for almost all $x$, we have $\delta_{11}(x)= 5.$

\end{document}